\documentclass[12pt,reqno]{amsart}
\usepackage{graphicx}
\usepackage{color}
\usepackage{mathrsfs,amsmath,amssymb,amsthm,amsfonts}

\usepackage{graphics,url,color,epsfig}
\usepackage{tikz}
\usepackage[colorlinks]{hyperref}
\AtBeginDocument{%
  \hypersetup{%
    linkcolor=blue,%
    citecolor=green,%
  }%
}
\usepackage{cleveref}
\usepackage{pgfplots}
\usepackage{multicol}

\def\R{\mathbb{R}}

\makeatletter

\newcommand{\Rmnum}[1]{\expandafter\@slowromancap\romannumeral #1@}
\makeatother

\newtheorem{thm}{Theorem}[section]

\newtheorem{lemma}[thm]{Lemma}

\newtheorem{theorem}[thm]{Theorem}

\usepackage[numbers,sort&compress]{natbib}
\newcommand{\x}{{\bf x}}

\newcommand{\fr}{\displaystyle\frac}

\newcommand{\mb}{\mbox}
\everymath{\displaystyle}

\allowdisplaybreaks
\pgfplotsset{compat=1.18}
\begin{document}

\author{Yahong Guo}
\address{Yahong Guo \newline\indent School of Mathematical Sciences \newline\indent  Shanghai Jiao Tong University \newline\indent Shanghai, 200240, P. R. China}
\email{yhguo@sjtu.edu.cn}

\author{Chilin Zhang}
\address{Chilin Zhang
\newline\indent
School of Mathematics \newline\indent Fudan University
\newline\indent Shanghai 200433, P. R. China}
\email{zhangchilin@fudan.edu.cn}

\title[A Liouville theorem]{Classification of solutions to a weighted singular fractional problem  in the half space}

\begin{abstract}
We focus on the classification of positive solutions to  $(-\Delta)^s u=\frac{x_n^{\alpha}}{u^\gamma}$ in the half space with $\gamma>0$, subject to the Dirichlet condition.  We show that when $-2s<\alpha<(\gamma-1)s$,   all positive solutions  exhibit one-dimensional symmetry and are monotone increasing in $x_n$.  
Moreover, we provide a complete classification of all such one-dimensional solutions via their 
``asymptotic $s$-order slope".
When $\alpha$  lies outside this range,
we demonstrate the nonexistence of global positive solutions.
\end{abstract}

\keywords{Classification of solutions; one-dimensional symmetry; the fractional Laplacian}

\maketitle

\numberwithin{equation}{section}
\section{Introduction and main results}
\subsection{Background}
We study the classification of positive solutions to the following fractional singular Lane-Emden-Fowler problem 
\begin{equation}\label{eq. main}
    \begin{cases}
    (-\Delta)^{s}u(x)=\frac{x_{n}^{\alpha}}{u^{\gamma}(x)},&x\in\mathbb{R}^{n}_{+},\\
    u(x)>0,&x\in\mathbb{R}^{n}_{+},\\
    u(x)=0&x\in\mathbb{R}^n\backslash\mathbb{R}^{n}_{+},
      \end{cases}
\end{equation}
where $0<s<1$, $n\geq 1$,  $\alpha\in\mathbb{R},$ and $\gamma >0$. In this context, for $x\in\mathbb{R}^n$, we denote  $x=(x',x_{n})$ with $x'\in\mathbb{R}^{n-1}$ and $\mathbb{R}^{n}_{+}=\{x\in\mathbb{R}^{n}:x_n>0\}$.

In \eqref{eq. main}, the symbol $(-\Delta)^s$ denotes the fractional Laplacian, which can be expressed as
\begin{equation}\label{eq1-1}
(-\Delta)^s u(x)=C_{n,s}PV\int_{\mathbb{R}^{n}}\frac{u(x)-u(y)}{|x-y|^{n+2s}}dy=C_{n,s}\lim_{\varepsilon\to 0}\int_{\mathbb{R}^n\backslash B_{\varepsilon}(0)}\frac{u(x)-u(y)}{|x-y|^{n+2s}}dy,
\end{equation}
In order that the integral in \eqref{eq1-1} is well defined, we require that $u\in C_{loc}^{1, 1}(\mathbb{R}^n_+)\cap \mathcal{L}_{2s}\cap C(\mathbb{R}^n)$, where
\begin{equation*}
\mathcal{L}_{2s}=\left\{u\in L_{loc}^1(\mathbb{R}^n) \,\Big| \int_{\mathbb{R}^n}\frac{|u(x)|}{1+|x|^{n+2s}}dx<\infty \right\}
\end{equation*}
endowed naturally with the norm
\begin{equation*}
\|u\|_{\mathcal{L}_{2s}}:= \int_{\mathbb{R}^n}\frac{|u(x)|}{1+|x|^{n+2s}}dx.
\end{equation*}
Because of the non-locality of  the fractional Laplacian, it is necessary to assume $u(x)=0$ in the whole complement of $\mathbb{R}^{n}_{+}$, rather than merely on the boundary $\partial\mathbb{R}^{n}_{+}$.

If $s=1$, $(-\Delta)^s$ becomes the regular Laplacian $-\Delta$.   For functions  $u\in C^2(\mathbb{R}^{n}_{+})\cap C(\overline{\mathbb{R}^{n}_{+}})$, the problem \eqref{eq. main} with $\alpha=0$ simplifies to the following singular local problem:
\begin{equation}\label{eq. main-i}
    \begin{cases}
    -\Delta u(x)=\frac{1}{u^\gamma(x)},&x\in\mathbb{R}^{n}_{+},\\
    u(x)>0,&x\in\mathbb{R}^{n}_{+},\\
     u(x)=0&x\in\partial\mathbb{R}^{n}_{+},
      \end{cases}
\end{equation}
the only boundary condition required is that $u(x)=0$ on $\partial\mathbb{R}^{n}_{+}$, which occurs spontaneously.

From a physical point of view, equations with a singular right-hand side represent a generalized version of the Lane-Emden-Fowler equation and find applications across multiple scientific domains. For instance, in the study of thermo-conductivity \cite{FuMa} where $u^\gamma$ models material resistivity, in the theory of gaseous dynamics in astrophysics \cite{Fo}, in signal transmission \cite{No}, and in the context of chemical heterogeneous catalysts \cite{Pe}. Among these applications, one of the most physically significant implementations occurs in the study of non-Newtonian pseudo-plastic fluids, where such singular equations model boundary layer phenomena (see e.g.,\cite{AcShpe,Naca,VasoMo} and references therein).

The study of semi-linear elliptic problems involving singular nonlinearities in bounded domains has attracted significant attention, beginning  with the seminal work in \cite{CRT1977}. For further developments, see also \cite{BO2010, BCT2014, delP1992, LM1991, OP2018}. For nonlocal singular problems in bounded domains, the readers may refer to \cite{AGJ2018, CMSS2017, GMS2017} and the related studies.

For $\gamma>0$, the singular problem $-\Delta u=u^{-\gamma}$ in half spaces, which arises from general equations $-\Delta u=u^{-\gamma}+f(u)$, often appears in blow-up analysis near the boundary of a domain. This is because the term $u^{-\gamma}$ becomes dominant as $u$ approaches zero. Such scenarios are discussed in \cite{CES2019,JFA2020}, where the authors examine variations of the Hopf lemma. Recently, Montoro-Muglia-Sciunzi in \cite{MMS,MMS1} provided a classification result for positive solutions to \eqref{eq. main-i} in the half space by establishing both the upper and lower bound estimates.

For the fractional case, it was shown in \cite{GuWu25} that global solutions to \eqref{eq. main} with $\alpha=0$ are all one-dimensional. The key difference between \cite{GuWu25} and the results obtained by Montoro-Muglia-Sciunzi lies in the difficulty of classifying one-dimensional solutions. For the local equation \eqref{eq. main-i}, one-dimensional solutions naturally satisfy an ODE, which can be analyzed by multiplying $u'(t)$ on both sides and integrating. This method is obviously invalid for nonlocal equations.

The purpose of this paper is to establish classification results for the problem \eqref{eq. main}, namely the fractional Lane-Emden-Fowler problem with singular nonlinearity in the half space. In this paper, we also improve the classification result in \cite{GuWu25} by classifying all one-dimensional solutions, even if global solutions cannot be explicitly written by closed form or by clear integrations.

\subsection{Main contributions}
Now we state our main result. First, we have the following existence and non-existence result.
\begin{theorem}\label{thm. nonexist}
    Let us fix some $n\in\mathbb{Z}_{+}$, $s\in(0,1)$, $\gamma\in(0,+\infty)$, and $\alpha\in\mathbb{R}$. Then, 
    there exists at least one global solution to \eqref{eq. main} 
    if and only if \begin{equation}\label{eq. existence requirement}
        -2s<\alpha<(\gamma-1)s.
    \end{equation}
\end{theorem}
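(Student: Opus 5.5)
The plan is to treat existence and non-existence separately. The guiding heuristic is the scaling of the equation in $x_n$. For a one-dimensional profile $u=c\,(x_n)_+^{\beta}$ one has $(-\Delta)^s u=c\,C(\beta)\,x_n^{\beta-2s}$ for $\beta\in(-1,2s)$, while the right-hand side is $c^{-\gamma}x_n^{\alpha-\gamma\beta}$. Matching exponents forces
\[
\beta=\frac{\alpha+2s}{1+\gamma}.
\]
Under this relation, $\beta>0$ is equivalent to $\alpha>-2s$, and $\beta<s$ is equivalent to $\alpha<(\gamma-1)s$. So \eqref{eq. existence requirement} says exactly that the boundary profile $x_n^{\beta}$ is more singular (in the Hölder sense) than the $s$-harmonic function $(x_n)_+^s$ but still vanishes on the boundary.

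\emph{Existence.} I will not rely on the sign of $C(\beta)$, which is delicate when $2s>1$. Instead I will build a one-dimensional solution $u(x)=w(x_n)$ by sub/supersolutions and monotone iteration on intervals $(0,R)$, letting $R\to\infty$.
\begin{itemize}
\item Supersolution: $\bar w=A\big(t^{\beta}\wedge 1+t_+^{s}\big)$, or a smoothed version, with $A$ large. Near $t=0$ the $t^{\beta}$ term dominates the singular right-hand side, and for $t\gg 1$ the $s$-harmonic piece keeps $\bar w$ inside $\mathcal L_{2s}$.
\item Subsolution: $\underline w=\varepsilon\,\psi$ with $\psi$ the torsion-type function of $(0,1)$, which behaves like $t^{s}$ near $0$. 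It is a subsolution once $\varepsilon$ is small, because $\beta<s$ makes $t^{\alpha}\underline w^{-\gamma}\gtrsim t^{\alpha-\gamma s}$ large near $0$.
\end{itemize}
Ordering $\underline w\le \bar w$ then gives a solution in each $(0,R)$ by the standard truncation of the singularity. Uniform bounds, local $C^{1,1}$ estimates in the interior, and the condition $\alpha>-2s$ (which makes $t^{\alpha}\bar w^{-\gamma}\cdot t^{s}$ locally integrable against the Green function of the half line) allow passing to the limit $R\to\infty$.

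\emph{Non-existence.} The starting point is that any solution is $s$-superharmonic and positive in $\mathbb R^n_+$. Comparing with the Poisson and Green kernels of a half ball gives $u\ge c\,x_n^{s}$ in $\{x_n\ge 1\}$, and $u\ge c\,x_n^s$ near $\partial\mathbb R^n_+$ locally.

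For $\alpha\le -2s$, I will test the equation against a cut-off of the first eigenfunction $\phi$ of a ball $B$ touching $\partial\mathbb R^n_+$ (so $\phi\sim x_n^{s}$). The left side $\int u(-\Delta)^s\phi$ is finite. For the right side I need a local upper bound $u\le Cx_n^{\sigma}$ for some $\sigma>0$, which I will obtain from continuity up to the boundary and a boundary Hölder estimate. Since $\gamma\sigma>0$ and $\alpha+s\le -s$, the quantity $\int x_n^{\alpha}u^{-\gamma}\phi\gtrsim\int x_n^{\alpha+s-\gamma\sigma}$ diverges, which is a contradiction.

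For $\alpha\ge(\gamma-1)s$, I will argue at infinity. Using the Green function $G_R$ of $B_{R/2}(Re_n)$ and Jensen's inequality for the convex map $t\mapsto t^{-\gamma}$,
\[
u(Re_n)\ \ge\ \int G_R\, y_n^{\alpha}u^{-\gamma}\ \gtrsim\ R^{\alpha+2s}\Big(\frac{\int G_R u}{\int G_R}\Big)^{-\gamma}.
\]
I will bound the weighted average $\int G_R u/\int G_R$ by $C\,u(Re_n)$ via the mean-value (super-harmonicity) inequality for $s$-superharmonic functions. This gives $u(Re_n)\gtrsim R^{\beta}$ with $\beta\ge s$, and in fact a strictly improving lower bound once it is fed back into the Green representation. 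Iterating pushes the growth exponent up to $2s$ (with a logarithmic gain in the borderline case $\alpha=(\gamma-1)s$), which contradicts $u\in\mathcal L_{2s}$.

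I expect the main obstacle to be this borderline case $\alpha=(\gamma-1)s$. There the scaling gives exactly $\beta=s$, so the power-counting bootstrap closes with no margin. I would first try a blow-down argument: normalize $u_R(x)=u(Rx)/(\text{scale})$, show that the limit is a nonnegative $s$-harmonic function in $\mathbb R^n_+$, hence a multiple of $(x_n)_+^{s}$ by the known Liouville theorem, and then show that the equation forces an extra $\log R$ factor in $u(Re_n)/R^s$ that is incompatible with this limit.
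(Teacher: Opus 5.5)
Your proposal has genuine gaps in all three parts. The decisive one is the case $\alpha\ge(\gamma-1)s$.

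\textbf{The case $\alpha\ge(\gamma-1)s$.} Your Jensen/mean-value estimate on the single ball $B_{R/2}(Re_n)$ only gives $u(Re_n)\gtrsim R^{\beta}$, with $\beta=\frac{\alpha+2s}{1+\gamma}$. That estimate never uses $\alpha\ge(\gamma-1)s$: it is the paper's bound $u\ge c\,x_n^{\beta}$, which $U_0$ attains in the existence range. So everything rests on the bootstrap, and that step does not work. Feeding a lower bound on $u$ into the Green representation, or into your inequality $u(Re_n)\gtrsim R^{\alpha+2s}u(Re_n)^{-\gamma}$, only makes the right-hand side smaller, because it bounds $x_n^{\alpha}u^{-\gamma}$ from above. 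Also, the logarithmic gain over $x_n^{s}$ you anticipate at the borderline is perfectly compatible with $u\in\mathcal{L}_{2s}$.

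What is missing is an ingredient that distinguishes $\beta\ge s$ from $\beta<s$, namely the sign $K_\beta\le 0$ in $(-\Delta)^s t_+^{\beta}=K_\beta t_+^{\beta-2s}$. The paper brings it in through minimal solutions:
\begin{itemize}
\item The solutions $u_r$ in $B_r^+$ with zero exterior data increase in $r$ and lie below any global solution. Hence $u_r\uparrow u_\infty$, which is again a global solution by the monotone convergence lemma.
\item The scaling $u_r(x)=r^{\beta}u_1(x/r)$ makes $u_\infty$ $\beta$-homogeneous.
\item The inclusions $B_r^+\subset p+B_{2r}^+\subset B_{4r}^+$ (for $p\in\partial\mathbb{R}^n_+$, $r>|p|$) make $u_\infty$ independent of $x'$.
\end{itemize}
Hence $u_\infty=c\,x_n^{\beta}$. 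This is excluded by $\mathcal{L}_{2s}$ if $\beta\ge 2s$, and by $K_\beta\le 0$ if $s\le\beta<2s$. The borderline $\beta=s$ needs no blow-down, since $x_n^{s}$ is $s$-harmonic.

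Your Jensen idea can be rescued by comparing scales rather than iterating at one scale. Use $G^n_+(te_n,z)\approx t^{s}R^{s-n}$ on $B_{R/4}(Re_n)$ and the Poisson-kernel bound $\mathrm{avg}_{B_{R/4}(Re_n)}u\lesssim (R/t)^{s}u(te_n)$ for $R\ge 8t$. Summing over $R=2^k t$, $k\ge 3$, gives $u(te_n)^{1+\gamma}\gtrsim t^{(1+\gamma)s}\sum_k (2^k t)^{\alpha+s-\gamma s}=\infty$, borderline included. But this is not the argument you propose.

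\textbf{The case $\alpha\le -2s$.} Your exponent count is wrong. The integral $\int_0 x_n^{\alpha+s-\gamma\sigma}\,dx_n$ diverges only if $\gamma\sigma\ge 1+\alpha+s$, i.e.\ $\gamma\sigma\ge 1-s$ at $\alpha=-2s$. Positive supersolutions satisfy $u\gtrsim x_n^{s}$ near the boundary, so $\sigma\le s$, and the argument fails whenever $\gamma s<1-s$. In addition, no boundary H\"older estimate is available for this a priori uncontrolled right-hand side. A fixed test function only reaches $\alpha\le -1-s+\gamma\sigma$; reaching $-2s$ needs a scale-sensitive argument.

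The paper's argument: $u$ is continuous, hence bounded near $\partial\mathbb{R}^n_+$, so $(-\Delta)^s u\ge c\,x_n^{\alpha}$ there. The potential of $x_n^{\alpha}$ behaves like $x_n^{\alpha+2s}$ (or is infinite), which does not vanish on the boundary. Equivalently, integrate $u\ge\int G_{B_\varepsilon^+}(\cdot,z)\,z_n^{\alpha}u(z)^{-\gamma}dz$ over $B_\varepsilon^+$, using that the torsion function of $B_\varepsilon^+$ is $\gtrsim\varepsilon^{s}x_n^{s}$ on $B_{\varepsilon/2}^+$. This gives $\varepsilon^{2s+\alpha}\lesssim(\sup_{B_\varepsilon^+}u)^{1+\gamma}\to 0$, which is absurd for $\alpha\le -2s$.

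\textbf{Existence.} Your supersolution $A(t^{\beta}\wedge 1+t_+^{s})$ fails at large $t$ when $\alpha>(\gamma-2)s$. There $(-\Delta)^s(t^{\beta}\wedge 1)\sim t^{-2s}$, while the right-hand side $\sim A^{-\gamma}t^{\alpha-\gamma s}$ decays more slowly. Near $t=0$ the supersolution inequality requires $K_\beta>0$ anyway. That sign is not delicate: for $0<\beta<s$, both factors $\tau^{s}-\tau^{\beta}$ and $1-\tau^{s-\beta-1}$ in the formula for $K_\beta$ are negative on $(0,1)$, for every $s\in(0,1)$. So $U_0=K_\beta^{-1/(1+\gamma)}t_+^{\beta}$ is an explicit solution, and this is the paper's whole existence proof.
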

Next, we construct global solutions to \eqref{eq. main}.
\begin{theorem}\label{thm. construction}
    Assume that \eqref{eq. existence requirement} holds. Then:
    \begin{itemize}
        \item[(1)] There exists a $\frac{\alpha+2s}{1+\gamma}$-homogeneous global solution to \eqref{eq. main} in the form:
        \begin{equation*}
            U_{0}(x)=U_{0}(x_{n})=C(s,\gamma,\alpha)x_{n}^{\beta},\quad\mbox{where }\beta=\frac{\alpha+2s}{1+\gamma}.
        \end{equation*}
        \item[(2)] For all $K>0$, there exists a $U_{K}(x)=U_{K}(x_{n})$ solving \eqref{eq. main} with
        \begin{equation*}
            \lim_{t\to\infty}\frac{U_{K}(t)}{t^{s}}=K.
        \end{equation*}
    \end{itemize}
\end{theorem}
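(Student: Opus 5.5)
For part (1) I would use the one-dimensional formula for the fractional Laplacian of a power. For $w_\beta(x)=(x_n)_+^{\beta}$ with $-1<\beta<2s$, the function $(-\Delta)^s w_\beta$ is $(\beta-2s)$-homogeneous and depends only on $x_n$. Hence
\begin{equation*}
(-\Delta)^s (x_n)_+^\beta = c(\beta)\,x_n^{\beta-2s}\quad\text{in }\mathbb{R}^n_+ .
\end{equation*}
The constant is known explicitly. Up to a positive factor,
\begin{equation*}
c(\beta)\propto \frac{\Gamma(\beta+1)\,\Gamma(2s-\beta)}{\Gamma(\beta+1-s)\,\Gamma(s-\beta)}\cdot\bigl(-\sin(\pi(\beta-s))\bigr)\text{-type factor}.
\end{equation*}
The points to check are that $c(\beta)>0$ exactly for $s-1<\beta<s$, with $c(s)=0$ (since $x_+^s$ is $s$-harmonic) and $c(s-1)=0$.

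Plugging $U_0=Cx_n^\beta$ into \eqref{eq. main} requires $\beta-2s=\alpha-\gamma\beta$, so $\beta=\frac{\alpha+2s}{1+\gamma}$, together with $C^{1+\gamma}c(\beta)=1$. The hypothesis \eqref{eq. existence requirement} gives exactly $0<\beta<s$: the condition $\beta>0$ is equivalent to $\alpha>-2s$, and $\beta<s$ is equivalent to $\alpha<(\gamma-1)s$. On this range $c(\beta)>0$, so $C=c(\beta)^{-1/(1+\gamma)}$ works. Membership in $\mathcal{L}_{2s}$, continuity, and $C^{1,1}_{loc}$ regularity are immediate since $\beta<s<2s$. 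The only real work in this part is verifying the sign of $c(\beta)$. I would do this by reducing to $n=1$ (integrate out $x'$) and either quoting the classical formula or arguing by comparison: $x_+^\beta$ with $\beta<s$ is ``more concave near $0$'' than $x_+^s$, which should force positivity.

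For part (2) I would construct $U_K$ by a sub/super-solution or monotone iteration scheme in one dimension, building around the $s$-harmonic profile $Kt_+^s$. As supersolution I would take $\overline U=Kt_+^s+U_0(t)$. Its fractional Laplacian equals $c(\beta)C t^{\beta-2s}=t^{\alpha}U_0^{-\gamma}\ge t^\alpha\overline U^{-\gamma}$, so it is a supersolution. As subsolution I would take $\underline U=\max(Kt_+^s,\,\epsilon U_0)$-type functions, or simply $Kt_+^s$, which has zero fractional Laplacian and hence satisfies $0\le t^\alpha u^{-\gamma}$.

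The singular nonlinearity is decreasing in $u$, so the standard monotone iteration is not directly available. I would therefore solve truncated problems on $(0,R)$ with boundary data $\overline U$ outside (or $Kt^s$ for $t>R$), using Schauder/Perron on bounded intervals. The truncated solutions stay trapped between $\underline U$ and $\overline U$ by the comparison principle for $(-\Delta)^s u + (\text{decreasing function of }u)$. I would then let $R\to\infty$ using local Hölder estimates, which are uniform because the right-hand side is bounded by $t^\alpha (Kt^s)^{-\gamma}$ away from $0$ and controlled near $0$ by $\underline U\ge \epsilon U_0$. This produces a solution with $Kt^s\le U_K\le Kt^s+Ct^\beta$, and since $\beta<s$ we get $U_K(t)/t^s\to K$.

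The main obstacle is the lower barrier near $t=0$. The subsolution $Kt^s$ alone gives $t^\alpha u^{-\gamma}\lesssim t^{\alpha-\gamma s}$, which may not be integrable against the Green kernel. I need $u\gtrsim t^\beta$ near $0$ to keep the right-hand side at the right scale. I would try $\underline U=\max(Kt_+^s,\epsilon U_0)$: at points where $\epsilon U_0$ is active, the fractional Laplacian of the max is at most $\epsilon^{1}c(\beta)C t^{\beta-2s}$, which is at most $t^\alpha(\epsilon U_0)^{-\gamma}$ for small $\epsilon$ because $1\le\epsilon^{-1-\gamma}$. That would make $\underline U$ a viscosity subsolution.
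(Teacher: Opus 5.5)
Your proposal is correct and has the same architecture as the paper's proof. Part (1) matches the paper exactly: it quotes Lemma~\ref{lem. homogeneous solution derivative expression} for $(-\Delta)^s t_+^\beta=K_\beta t_+^{\beta-2s}$ with $K_\beta>0$ on $0<\beta<s$, and takes $C=K_\beta^{-1/(1+\gamma)}$.

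One correction to your formula. By the reflection formula, $\frac{\Gamma(\beta+1)\Gamma(2s-\beta)}{\Gamma(\beta+1-s)\Gamma(s-\beta)}=\frac{1}{\pi}\Gamma(\beta+1)\Gamma(2s-\beta)\sin(\pi(s-\beta))$. This is already the whole constant for the operator with symbol $|\xi|^{2s}$. Multiplying by a further sine factor would square it and erase exactly the sign information you need.

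In part (2) the paper also solves on $(0,b)$ with exterior data $U_0+Kt_+^s$. It traps the solutions between $Kt^s$ and $Kt^s+U_0$ using Lemma~\ref{lem. maximal principle}, which is the comparison principle for $(-\Delta)^s u=f(t,u)$ with $f$ decreasing in $u$. It then lets $b\to\infty$.

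Two sub-steps are handled differently.

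\textbf{The singularity at $t=0$.} The paper does not use a barrier like your $\max(Kt_+^s,\epsilon U_0)$. Existence on $(0,b)$ comes from Lemma~\ref{lem. well posedness}, where the data are regularized by $+\varepsilon$ so that $u\ge\varepsilon$. The bound $u\gtrsim t^\beta$ near $0$ is then a consequence of the equation itself, through the rescaled barrier $(1-|x|^2)_+^s$ in Lemma~\ref{lem. lower bound of a local solution}. Your barrier is nonetheless valid: $\epsilon\le 1$ suffices, and the kink of the maximum is convex, so a $C^1$ solution cannot touch it from above there. It also makes explicit the Green-kernel integrability of the right-hand side, $t^{\alpha}(\epsilon U_0)^{-\gamma}\sim t^{\beta-2s}$ with $\beta-2s+s>-1$.

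\textbf{The limit $R\to\infty$.} The paper uses that $U_{K,b}$ is decreasing in $b$, together with its monotone convergence Lemma~\ref{lem. monotone convergence}. You use local H\"older compactness, but you should state how the nonlocal tail is controlled. The uniform bound $U_{K,R}\le Kt_+^s+U_0\in\mathcal{L}_{2s}$ is a dominating function, so dominated convergence passes $(-\Delta)^s$ to the limit along a locally uniformly convergent subsequence. This makes the monotone convergence lemma unnecessary here.

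The paper's monotonicity gives convergence of the whole family and hence a canonical $U_K$, which it later uses for continuity in $K$ and for the classification. You get this too: with exterior data $\overline U$, itself a supersolution, the comparison principle makes your truncated solutions decreasing in $R$ as well.
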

We have the following further description of $U_{K}$'s constructed in Theorem~\ref{thm. construction}.
\begin{theorem}\label{thm. further property}
    Assume that \eqref{eq. existence requirement} holds, and let $U_{K}$'s for $K\geq0$ be the one dimensional solutions constructed in Theorem~\ref{thm. construction}. Then:
    \begin{itemize}
        \item[(1)] For $t\geq0$ and $K>0$, we have
        \begin{equation*}
            \max\{K t^{s},U_{0}(t)\}\leq U_{K}(t)\leq K t^{s}+U_{0}(t).
        \end{equation*}
        \item[(2)] The solutions are mutually scaling-related. Precisely speaking, for every $\lambda>0$,
        \begin{equation*}
            U_{\lambda K}(t)=\lambda^{-\frac{\alpha+2s}{s(\gamma-1)-\alpha}}U_{K}(\lambda^{\frac{1+\gamma}{s(\gamma-1)-\alpha}}t).
        \end{equation*}
        \item[(3)] The global solutions $U_{K}(x_{n})$'s are strictly increasing in $x_{n}$ for $x_{n}\geq0$.
    \end{itemize}
\end{theorem}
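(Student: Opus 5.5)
The plan rests on three one-dimensional facts. First, $t_+^{s}$ is $s$-harmonic in the half line, i.e. $(-\Delta)^s (x_n)_+^s=0$ in $\mathbb{R}^n_+$. Second, $U_0=C x_n^\beta$ is an exact solution. Third, the problem is invariant under the scaling $u\mapsto \mu^{-a}u(\mu x)$ with $a=\beta=\frac{\alpha+2s}{1+\gamma}$. We also use that the construction in Theorem~\ref{thm. construction} produces $U_K$ as a limit of solutions of truncated problems, so that comparison principles pass to the limit. The proof of the three items goes in the order (2), (1), (3).

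\emph{Scaling (2).} Set $V(t)=\lambda^{-p}U_K(\lambda^{q}t)$ with $p=\frac{\alpha+2s}{s(\gamma-1)-\alpha}$ and $q=\frac{1+\gamma}{s(\gamma-1)-\alpha}$. A direct computation gives
\[
(-\Delta)^sV=\lambda^{-p+2sq}\frac{(\lambda^q t)^\alpha}{U_K^\gamma}=\lambda^{-p+2sq-\alpha q+\gamma p}\frac{t^\alpha}{V^\gamma}.
\]
The exponent is $(\gamma-1)p+(2s-\alpha)q$, which by the definitions of $p,q$ equals
\[
\frac{(\gamma-1)(\alpha+2s)+(2s-\alpha)(1+\gamma)}{s(\gamma-1)-\alpha}=\frac{4s\gamma-2\alpha}{\cdot}\ne 0 .
\]
This is not zero, so I must recheck the identity. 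The correct requirement is $-p-\alpha q+2sq+\gamma p=0$, i.e. $p(\gamma-1)=q(\alpha-2s)$. Since this disagrees with the displayed exponents, I expect the intended normalisation to be $p=\beta q$, which is the scaling that preserves the equation, together with $q(s-\beta)=1$ coming from the asymptotic slope $\lambda^{-p}\cdot\lambda^{qs}K$. With these choices $V$ solves \eqref{eq. main} and has slope $\lambda^{qs-p}K$. If uniqueness of the solution with a given slope is available, either from the construction or from a comparison argument, this yields $V=U_{\lambda K}$ after matching exponents. The algebra is routine, but the sign conventions must be reconciled with the stated formula.

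\emph{Bounds (1).} For the lower bound, $U_K$ is a supersolution relative to $Kt^s$ because $(-\Delta)^s(U_K-Kt^s)>0$ and $U_K-Kt^s\ge 0$ outside the domain. Moreover $U_K-Kt^s=o(t^s)$ at infinity. A maximum principle for $s$-superharmonic functions on the half line with sublinear-$s$ growth, such as a Liouville-type comparison against $\varepsilon t^s$, then gives $U_K\ge Kt^s$. For $U_K\ge U_0$: on $\{U_K<U_0\}$ we have $(-\Delta)^s(U_K-U_0)=t^\alpha(U_K^{-\gamma}-U_0^{-\gamma})>0$. The growth condition follows from $\beta<s$, since $U_K\ge Kt^s$ dominates $U_0$ at infinity. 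This set is therefore bounded, and the maximum principle shows it is empty. For the upper bound, $W=Kt^s+U_0$ satisfies
\[
(-\Delta)^s W=t^\alpha U_0^{-\gamma}\ge t^\alpha W^{-\gamma},
\]
so $W$ is a supersolution. Since $U_K-W=-U_0+o(t^s)$ has a sign at infinity only up to $o(t^s)$, the comparison uses $W+\varepsilon t^s$ and then lets $\varepsilon\to0$. The difficulty is near $t=0$, where both functions vanish; the singular term has the favourable monotonicity, so a bounded-domain comparison handles it.

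\emph{Monotonicity (3).} I would use the moving-plane or sliding method in one dimension. For $\tau>0$, compare $U_K(t+\tau)$ with $U_K(t)$. The shifted function is positive at $t=0$ and satisfies
\[
(-\Delta)^sU_K(\cdot+\tau)=(t+\tau)^\alpha U_K(t+\tau)^{-\gamma}.
\]
For $\alpha\ge0$ the weight grows, which goes in the wrong direction, so plain sliding needs care. An alternative is to use the difference quotient together with the bounds in (1): the bounds give positivity near infinity, and near $0$ the boundary behaviour $U_K\sim t^\beta$ or $t^s$ gives positivity there as well.

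\textbf{Main obstacle.} The hard step is (3) in the case $\alpha>0$. There I would instead try the scaling family: for $\mu>1$, compare $\mu^{-\beta}U_K(\mu t)$, which by (2) is a $U_{K'}$ with $K'<K$, against $U_K$. A comparison $U_{K'}\le U_K$ across different slopes, obtained by a maximum principle argument, gives $\mu^{-\beta}U_K(\mu t)\le U_K(t)$. This says $t^{-\beta}U_K$ is nonincreasing, which is a one-sided estimate and not yet monotonicity of $U_K$. Combining it with $s$-harmonic comparison of $U_K$ with its translates $U_K(t+\tau)$ is where I expect the real work to lie.
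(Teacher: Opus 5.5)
\textbf{Gap in (3).} Part (3) is not proved. The scaling route you sketch at the end is the right one; it fails only because the inequality comes out reversed.

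For $\mu>1$, the function $\mu^{-\beta}U_K(\mu t)$ solves the equation. Its asymptotic slope is
\[
\lim_{t\to\infty}\frac{\mu^{-\beta}U_K(\mu t)}{t^{s}}=\mu^{s-\beta}K .
\]
Under \eqref{eq. existence requirement} we have $\beta<s$, so this is $K'=\mu^{s-\beta}K>K$, not $K'<K$. In the notation of (2), it is the case $\lambda=\mu^{1/q}>1$ with $q>0$.

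Monotonicity in the slope (Lemma~\ref{lem. continuity in slope}: compare the truncated problems $U_{K_2,b}\ge U_{K_1,b}$, then let $b\to\infty$) therefore gives $\mu^{-\beta}U_K(\mu t)=U_{K'}(t)\ge U_K(t)$. So $t^{-\beta}U_K$ is \emph{nondecreasing}. Since $\beta>0$ (from $\alpha>-2s$), this immediately yields $U_K(\mu t)\ge \mu^{\beta}U_K(t)>U_K(t)$ for all $t>0$ and $\mu>1$. That is strict monotonicity, and it is exactly the paper's proof; no sliding or translation argument is needed.

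The inequality you derived, $t^{-\beta}U_K$ nonincreasing, is in fact false. It contradicts your own bound $t^{-\beta}U_K(t)\ge Kt^{s-\beta}\to\infty$. Your remarks on sliding also have the cases backwards. For $\alpha\ge0$ one has $(t+\tau)^{\alpha}\ge t^{\alpha}$, so the translate $U_K(\cdot+\tau)$ is a supersolution; that is the favorable case. It is $\alpha<0$ where plain sliding breaks down.

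\textbf{Part (2).} The exponent bookkeeping has two sign errors. The factor $(\lambda^{q}t)^{\alpha}$ contributes $+\alpha q$, and $U_K(\lambda^{q}t)^{-\gamma}=\lambda^{-\gamma p}V^{-\gamma}$ contributes $-\gamma p$. The total exponent is therefore $(2s+\alpha)q-(1+\gamma)p$, which vanishes exactly when $p=\beta q$. The stated exponents do satisfy both $p=\beta q$ and $qs-p=1$, so there is no discrepancy with the theorem to reconcile. With the uniqueness of a solution with given slope supplied by Theorem~\ref{thm. classification}~(2), your argument for (2) coincides with the paper's.

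\textbf{Part (1).} Your direct comparison on $(0,R)$ is valid. It uses $U_K/t^{s}\to K$ to order the functions on $[R,\infty)$, and then the bounded-domain maximum principle. This is a bit more hands-on than the paper's route. The paper simply inherits $Kt^{s}\le U_K\le Kt^{s}+U_0$ from the barriers used for the truncated problems $U_{K,b}$, and obtains $U_K\ge U_0$ from the minimality of $U_0$ (Lemma~\ref{lem. homogeneous solution is minimal}).
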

Finally, we give the full classification of global solutions.
\begin{theorem}\label{thm. classification}
    Assume that \eqref{eq. existence requirement} holds. Then:
    \begin{itemize}
        \item[(1)] All solutions $u(x)$ to \eqref{eq. main} must be one-dimensional, i.e.:
        \begin{equation*}
            u(x)=u(x_{n}).
        \end{equation*}
        \item[(2)] There are no global solutions to \eqref{eq. main} other than $U_{0}(x_{n})$ and $U_{K}(x_{n})$'s in Theorem~\ref{thm. construction}.
    \end{itemize}
\end{theorem}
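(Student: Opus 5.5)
Part (1) will be proved by the method of moving planes in the directions parallel to $\partial\mathbb{R}^n_+$, after first establishing two-sided a priori bounds. The order of steps is as follows.

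\textbf{Step 1: lower bound.} For any solution $u$ we show $u(x)\geq U_0(x_n)=C x_n^{\beta}$. Compare $u$ with the rescaled family $\lambda^{-\beta}U_0(\lambda x_n)$, or with truncated barriers supported in large balls. Since the nonlinearity $x_n^\alpha u^{-\gamma}$ is decreasing in $u$, a comparison principle for $(-\Delta)^s v=x_n^\alpha v^{-\gamma}$ holds on bounded sets. A sliding or limiting argument then gives $u\geq U_0$.

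\textbf{Step 2: upper bound.} We show $u(x)\leq C(1+x_n)^{\max(s,\beta)}$, in the sense of a bound by $Kx_n^s+U_0$ for some $K$. For this we use the Poisson-kernel/Green representation in the half space. The requirement $u\in\mathcal L_{2s}$ and the decay $x_n^\alpha u^{-\gamma}\lesssim x_n^{\alpha-\gamma\beta}$ (from Step 1) make the Green potential of the right-hand side comparable to $x_n^\beta$; this is where $\alpha<(\gamma-1)s$, i.e. $\beta<s$, enters. The remainder $u-G[f]$ is then a nonnegative $s$-harmonic function in $\mathbb{R}^n_+$ vanishing outside. By the known Liouville theorem for the fractional Laplacian in the half space, it equals $K(x_n)_+^s$ with $K\geq0$. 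This representation $u=K x_n^s+G[x_n^\alpha u^{-\gamma}]$ is the key structural fact.

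\textbf{Step 3: one-dimensional symmetry.} For a direction $e$ tangent to $\partial\mathbb{R}^n_+$, set $u_h(x)=u(x+he)$. Both $u$ and $u_h$ have the same representation with the same $K$, since $K=\lim u/x_n^s$ along the normal direction and this limit is independent of $x'$. Let $w=u-u_h$. Then $(-\Delta)^s w=x_n^\alpha(u^{-\gamma}-u_h^{-\gamma})$, whose sign is opposite to that of $w$, and $w=o(x_n^s)$ at infinity by Step 2. A maximum principle for such monotone operators then gives $w\le0$. The argument runs as follows: if $\sup w>0$, take an approximate maximizing sequence, translate, and pass to a limit using the compactness afforded by Steps 1 and 2. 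The limit is a function attaining a positive maximum where $(-\Delta)^s w>0$ is forced, while the equation gives $\leq0$, a contradiction. By symmetry in $h$, $u(x+he)=u(x)$, so $u=u(x_n)$.

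\textbf{Step 4: classification, part (2).} A one-dimensional solution has a slope $K=\lim_{t\to\infty}u(t)/t^s$ by Step 2. If $K=0$, compare with $U_0$; if $K>0$, compare with $U_K$. Two solutions $u,v$ with the same $K$ satisfy $u-v=G[\cdot]$ with a right-hand side of sign opposite to $u-v$, and $u-v=o(t^s)$. The same maximum-principle argument as in Step 3, now run in one variable, yields $u=v$. Theorem~\ref{thm. further property} and the existence in Theorem~\ref{thm. construction} then identify $u$ with $U_K$.

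\textbf{Main obstacle.} I expect Step 2 to be the hardest: obtaining the precise decomposition with exactly $Kx_n^s$ and controlling the Green potential with only the one-sided bound from Step 1. The maximum principle in the unbounded domain in Steps 3–4 also needs the sublinear growth $o(x_n^s)$, which I would obtain by showing $G[x_n^\alpha u^{-\gamma}]\lesssim x_n^\beta$ with $\beta<s$.
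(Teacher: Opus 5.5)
Your argument is sound in outline, but it takes a different route from the paper in both parts.

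\textbf{Part (1).} The paper never uses the expansion $u=Kx_n^s+O(x_n^\beta)$ here. It Kelvin-transforms $u$ about a boundary point. The transform $v$ then tends to $0$ at infinity, and for this only local boundedness of $u$ near that point is needed. It then runs moving planes in the tangential directions. A negative minimum of $v(x^\lambda)-v(x)$ is attained directly, so no compactness is needed, and one obtains radial symmetry in $x'$ about every boundary point.

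\textbf{Part (2).} The paper sandwiches $u$ between $U_{K-\varepsilon}$ and $U_{K+\varepsilon}$ by comparison on bounded intervals $(0,b)$. It concludes with continuity in $K$ (Lemma~\ref{lem. continuity in slope}), and uses the minimality of $U_0$ (Lemma~\ref{lem. homogeneous solution is minimal}) when $K=0$.

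\textbf{Your route.} You put Lemma~\ref{lem. u is close to a linear function} first; the paper proves it in all dimensions, essentially as in your Step 2. You note that tangential translates share the same slope $K$. Then a single Phragm\'en--Lindel\"of type maximum principle gives both symmetry and uniqueness at a given slope. This avoids the Kelvin transform and makes the continuity and minimality lemmas unnecessary. The price is that you need the $n$-dimensional asymptotics before symmetry, plus a translation-compactness step for the nonlocal operator.

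\textbf{The maximum-principle step needs a modification.} You only know $|w|\le Cx_n^\beta$, which is unbounded. So $\sup w$ may be $+\infty$, or it may be approached as $x_n\to\infty$, where $x'$-translations give no compactness. Apply the argument instead to $w_\varepsilon=w-\varepsilon(x_n)_+^s$:
\begin{itemize}
\item[(a)] it has the same fractional Laplacian in $\mathbb{R}^n_+$;
\item[(b)] uniformly in $x'$, it tends to $-\infty$ as $x_n\to\infty$ (because $\beta<s$) and to $0$ as $x_n\to0$;
\item[(c)] hence a positive supremum is approached inside a strip $\{\delta\le x_n\le H\}$.
\end{itemize}
Now translate in $x'$ and pass to the limit, using the local estimates from Steps 1--2 and controlling tails by $u\le Kx_n^s+Cx_n^\beta$. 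At the limiting maximum point, $\tilde w>\varepsilon x_n^s>0$, so the right-hand side is negative, whereas $(-\Delta)^s\tilde w_\varepsilon>0$ there. This is a contradiction; then let $\varepsilon\to0$. In Step 4 the same argument works on $(0,\infty)$ without any compactness.

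\textbf{Two smaller points.}
\begin{itemize}
\item[(i)] In Step 2, nonnegativity of $u-G[f]$ needs a justification. For example, $u\ge G_{B_R^+}[f]$ by comparison, then let $R\to\infty$. The paper instead proves $u-G[f]\ge -x_n^s$ with a barrier.
\item[(ii)] In Step 4, invoke only the bound $Kt^s\le U_K\le Kt^s+U_0$ coming from the construction. Parts (2)--(3) of Theorem~\ref{thm. further property} are derived from the classification itself, so citing them would be circular.
\end{itemize}
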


In fact, its non-fractional counterpart also holds, and we state it below.
\begin{theorem}
    All statements in Theorem~\ref{thm. nonexist}, Theorem~\ref{thm. construction}, Theorem~\ref{thm. further property}, and Theorem~\ref{thm. classification} also hold in the case $s=1$.
\end{theorem}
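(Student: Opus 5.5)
The plan is to rerun the whole fractional scheme with $(-\Delta)^s$ replaced by $-\Delta$, in four steps, and to check that every place where nonlocality was used either simplifies or has a direct local replacement. For $s=1$ the relevant range becomes $-2<\alpha<\gamma-1$, with $\beta=\frac{\alpha+2}{1+\gamma}\in(0,1)$.

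\textbf{Step 1: the one-dimensional problem.} Writing $u=u(t)$ with $t=x_n$, the equation becomes the ODE $-u''=t^{\alpha}u^{-\gamma}$ with $u(0)=0$.

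\textbf{Step 2: construction, Theorem~\ref{thm. construction}.} The homogeneous solution is explicit: $U_0(t)=Ct^\beta$ satisfies the ODE with
\begin{equation*}
C^{1+\gamma}=\frac{1}{\beta(1-\beta)},
\end{equation*}
and this is positive precisely because $0<\beta<1$.
\begin{itemize}
\item[(a)] For $K>0$ we look for a solution that is concave, vanishes at $0$, and has $u'(\infty)=K$. We write it as the fixed point
\begin{equation*}
u(t)=Kt+\int_0^\infty \min(t,\tau)\,\tau^{\alpha}u(\tau)^{-\gamma}\,d\tau .
\end{equation*}
\item[(b)] Monotone iteration between the subsolution $\max\{Kt,U_0\}$ and the supersolution $Kt+U_0$ gives existence; this is part (1) of Theorem~\ref{thm. further property}.
\item[(c)] The integral converges at $\infty$ when $u\ge Kt$ because $\alpha-\gamma<-1$, i.e. $\alpha<\gamma-1$.
\item[(d)] It converges at $0$ when $u\ge Ct^\beta$ because $\alpha-\gamma\beta>-2$, which follows from $\alpha>-2$.
\end{itemize}

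\textbf{Step 3: scaling and monotonicity, Theorem~\ref{thm. further property}.} Scaling, part (2), is a direct check with $s=1$. Monotonicity, part (3), follows from concavity together with $u'(\infty)=K\ge 0$.

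\textbf{Step 4: nonexistence, Theorem~\ref{thm. nonexist}.} Outside the range we argue by integral estimates. For a positive superharmonic $u$ in the half space, comparison with the Poisson kernel gives $u\ge c\,x_n$ far from the boundary.
\begin{itemize}
\item[(a)] Testing the equation against the first eigenfunction of a large cube, adapted to the scaling, should yield a contradiction when $\alpha\ge\gamma-1$. The mechanism is that the source $x_n^\alpha u^{-\gamma}$ is then too large at infinity compared with the linear growth allowed for a positive superharmonic function.
\item[(b)] Near the boundary, when $\alpha\le-2$, a contradiction should come from the Green representation on strips, $u(x)\gtrsim\int G\,x_n^{\alpha}u^{-\gamma}$. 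Together with $u\lesssim x_n$ near the boundary (local boundedness), this gives divergence, using $\alpha+1-\gamma\cdot 1\le -1$ and a bootstrap.
\end{itemize}

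\textbf{Step 5: classification, Theorem~\ref{thm. classification}.} We follow Montoro--Muglia--Sciunzi \cite{MMS,MMS1}, adapted to the weight.
\begin{itemize}
\item[(a)] Prove two-sided bounds $c\,U_0(x_n)\le u\le C(x_n+U_0(x_n))$.
\item[(b)] Use the moving plane method in the $x_n$ direction to obtain monotonicity in $x_n$.
\item[(c)] Use a sliding or translation-compactness argument in the $x'$ directions to obtain one-dimensional symmetry. Concretely, $\sup_{x'}u(x',t)$ and $\inf_{x'}u(x',t)$ are a sub- and a supersolution of the ODE with the same asymptotic slope $K=\lim u/x_n$, and the slope is forced by the Liouville theorem for positive harmonic functions in the half space.
\end{itemize}
Once $u$ is one-dimensional, uniqueness for fixed $K$ follows from the ODE: comparing two concave solutions with the same limiting slope and the same boundary value via a maximum principle for $-w''=c(t)w$ with $c\le0$ forces them to coincide.

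\textbf{Main obstacle.} I expect the obstacle to be Step~5(a), the uniform lower bound $u\ge cU_0$ near the boundary in the range $\alpha<0$, where the weight is singular. I would try first a scaled barrier: compare $u$ with $\varepsilon U_0$ in bounded boxes, then send the box size to infinity using the scaling invariance.
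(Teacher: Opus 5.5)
\textbf{The gap: Step~4(a).} The genuine gap is nonexistence for $\alpha\ge\gamma-1$ when $n\ge2$. (For $n=1$, concavity of $u$ settles it at once.)

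First, the lemma you invoke is false. A positive superharmonic function vanishing on $\partial\mathbb{R}^n_+$ need not satisfy $u\ge c\,x_n$ far from the boundary; $U_0=Cx_n^{\beta}$ with $\beta<1$ is a counterexample, and it solves the very equation. Moreover, a lower bound on $u$ makes the source $x_n^{\alpha}u^{-\gamma}$ \emph{smaller}, so it cannot drive a contradiction.

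What the eigenfunction test actually needs is an \emph{upper} bound on the growth of $u$, and your plan has none. The route used in the existence range, $u=Kx_n+G[x_n^{\alpha}u^{-\gamma}]$ with $G[\cdot]\lesssim x_n^{\beta}$, needs $\int^{\infty}\tau^{\beta-2}\,d\tau<\infty$, i.e.\ $\beta<1$, which is exactly what is being excluded.

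Even granting $A_R\lesssim R$ for the $\varphi_R$-weighted average $A_R$ of $u$ on the cube of size $R$, the test plus Jensen only gives $R^{\alpha}A_R^{-\gamma}\lesssim R^{-2}A_R$. That is, $R^{\alpha+2}\lesssim A_R^{1+\gamma}\lesssim R^{1+\gamma}$, which is contradictory only for $\alpha>\gamma-1$. The endpoint $\alpha=\gamma-1$ (i.e.\ $\beta=1$) is invisible to any single-scale power count.

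\textbf{Two ways to close it.} One is the paper's minimal-solution argument. The solutions $u_r$ in $B_r^+$ with zero data increase to a global solution $u_\infty\le u$. It is $\beta$-homogeneous, since $u_r(x)=r^{\beta}u_1(x/r)$. It is $x'$-invariant, since $B_r^+\subset p+B_{2r}^+\subset B_{4r}^+$ for $p\in\partial\mathbb{R}^n_+$, $r>|p|$. Hence $u_\infty=c\,x_n^{\beta}$, which is impossible because $-\Delta(c\,x_n^{\beta})=c\beta(1-\beta)x_n^{\beta-2}\le0$ for $\beta\ge1$.

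The other reuses your own Step~5(c) device. The lower semicontinuous envelope $v$ of $\inf_{x'}u(x',t)$ is a viscosity supersolution of $-v''\ge t^{\alpha}v^{-\gamma}$. It is positive because the compactly supported torsion barrier of Lemma~\ref{lem. lower bound of a local solution} gives $u\ge c\,x_n^{\beta}$ for every $\alpha$; so your ``main obstacle'' is in fact routine. Hence $v$ is concave, $v(t)\le Ct$ for $t\ge1$, and $\int_1^{\infty}t^{\alpha}v^{-\gamma}\,dt\le v'_{+}(1)<\infty$. This contradicts $\int_1^{\infty}t^{\alpha-\gamma}\,dt=\infty$, endpoint included.

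\textbf{The rest of your route.} It differs from the paper's: concavity replaces the scaling trick for monotonicity, and $\sup/\inf$ over $x'$ plus a one-dimensional convexity comparison replaces the Kelvin transform and moving planes. It is sound, with three corrections.

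First, drop Step~5(b). For $-2<\alpha<0$ the weight decreases in $x_n$, so reflection in $x_n$ loses its sign. It is also unnecessary once 5(c) is run with the sharp bound $Kx_n\le u\le Kx_n+Cx_n^{\beta}$ rather than $u\le C(x_n+U_0)$. Then $\sup_{x'}u-\inf_{x'}u$ is convex, vanishes at $0$, and is $O(t^{\beta})$, hence vanishes identically.

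Second, in Step~2 the fixed-point map is order-reversing, so plain monotone iteration does not apply. Solve on $(0,b)$ and let $b\to\infty$, as the paper does.

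Third, in Step~4(b) local boundedness gives $u\le C$, not $u\lesssim x_n$. This already suffices, since $\int_0\tau^{1+\alpha}\,d\tau=\infty$ for $\alpha\le-2$.
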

\begin{proof}
    The methods in proving these theorems are also valid for the local equation, so we omit the proof.
\end{proof}

\subsection{Key idea}
Our method in proving Theorem~\ref{thm. classification} involves the following key ingredients:
\begin{itemize}
    \item Asymptotic growth rate estimate,
    \item One-dimensional symmetry,
    \item Construction of global solutions and classification of one-dimensional solutions.
\end{itemize}

First, we obtain the asymptotic growth rate estimate of a global solution $u(x)$ using the integral equation of \eqref{eq. main}. Recall that the Green function for $(-\Delta)^{s}$ in the half space $\mathbb{R}_{+}^{n}$ can be expressed by:
\begin{equation*}
    G^{n}_{+}(x,z)=\kappa(n,s)|x-z|^{2s-n}\int_0^{\frac{4x_nz_n}{|x-z|^2}}\frac{b^{s-1}}{(b+1)^{\frac{n}{2}}}db,\,\ \kappa(n,s)=\frac{\Gamma(\frac{n}{2})}{2^{2s}\pi^{\frac{n}{2}}\Gamma^2(s)}.
\end{equation*}
With the Green function, it can be verified that the problem \eqref{eq. main} in the half space is equivalent to the following integral equation:
\begin{equation}\label{grn}
u(x)=K(x_n)_+^s+\int_{\R^n_+}G^n_+(x,z)\frac{x_{n}^{\alpha}}{u(z)^{\gamma}}dz,
\end{equation}
and we are able to establish the following key estimate for the solutions.
\begin{lemma}\label{lem. u is close to a linear function}
Let $\gamma>0$ and assume that \eqref{eq. existence requirement} holds. Let $u$ be a solution of \eqref{eq. main}, then:
\begin{itemize}
    \item[(1)] There exists some $c=c(n,s,\gamma,\alpha)$ such that
    \begin{equation*}
        u(x)\geq c x_{n}^{\frac{\alpha+2s}{1+\gamma}}\quad\mbox{in }\mathbb{R}^{n}_{+}.
    \end{equation*}
    \item[(2)] There exists some uniform $C=C(n,s,\gamma,\alpha)$, and some non-uniform constant $K=K(u)$, which depends on each global solution $u$, such that
    \begin{equation}\label{accurate-estimate}
|u(x)-K x_n^s|\leq C x_n^{\frac{\alpha+2s}{\gamma+1}}\quad\mbox{in }\mathbb{R}_{+}^{n}.
\end{equation}
\end{itemize}
\end{lemma}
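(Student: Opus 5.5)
We work throughout with the integral representation \eqref{grn}, which we take as given, with some $K=K(u)\geq 0$. Write $\beta=\frac{\alpha+2s}{1+\gamma}$ and note that \eqref{eq. existence requirement} is equivalent to $0<\beta<s$. The basic computation is the following Green potential estimate, valid for exponents $\sigma$ with $-2s<\sigma<-s$ (equivalently $-1<\sigma+s$ and $\sigma+2s<s$):
\begin{equation*}
I_\sigma(x):=\int_{\mathbb{R}^n_+}G^n_+(x,z)\,z_n^{\sigma}\,dz = c_\sigma\, x_n^{\sigma+2s}.
\end{equation*}
This is a one-dimensional identity. Integrating the Green function in $z'$ gives the one-dimensional half-line Green function $G^1_+(x_n,z_n)$, which behaves like $\min(x_n,z_n)^s\max(x_n,z_n)^{s-1}$ up to constants. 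The integral is then finite exactly in that range and homogeneous of degree $\sigma+2s$. We apply it with $\sigma=\alpha-\gamma\beta=\beta-2s$, which satisfies $-2s<\sigma<-s$ precisely because $0<\beta<s$.

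\emph{Part (1).} Since $u\geq 0$, dropping the nonnegative integral in \eqref{grn} already gives $u\geq K x_n^s$, but this carries no information when $K=0$. Instead we iterate the ordering. Let $\Phi(v)(x)=\int G^n_+\, z_n^\alpha v^{-\gamma}\,dz$, which is order reversing, and note $u\geq \Phi(u)$. First we obtain an a priori upper bound of the form $u\leq A x_n^s + B$ near infinity. This follows from $u\in\mathcal{L}_{2s}$ together with the structure of \eqref{grn}, since the Green potential of a positive measure against which $u$ is integrable grows at most like $x_n^s$ in the normal direction. Plugging that upper bound into $\Phi$ produces a lower bound. We then run the standard monotone comparison between sub- and supersolutions of the form $c x_n^\beta$ and $C x_n^\beta + K x_n^s$.

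The cleanest route is a scaling/blow-down argument. Set $u_R(x)=R^{-\beta}u(Rx)$, which again solves \eqref{eq. main} with $K_R=KR^{s-\beta}$. Suppose $\inf u/x_n^\beta$ were degenerate. Then we get a sequence of rescaled solutions, each with $u_R(e_n)\to 0$. But \eqref{grn} forces
\begin{equation*}
u_R(e_n)\geq \int_{B_{1/2}(e_n)} G^n_+(e_n,z)\, z_n^\alpha\, u_R^{-\gamma}\,dz.
\end{equation*}
A Harnack-type control of $u_R$ on $B_{1/2}(e_n)$ then yields $u_R(e_n)\geq c\,u_R(e_n)^{-\gamma}$, hence $u_R(e_n)\geq c^{1/(1+\gamma)}$. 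This gives (1) with a uniform constant $c$. The Harnack control comes from $u$ being $s$-superharmonic, positive, and vanishing outside $\mathbb{R}^n_+$.

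\emph{Part (2).} By (1), $z_n^\alpha u^{-\gamma}\leq c^{-\gamma} z_n^{\alpha-\gamma\beta}$. Applying the potential estimate with $\sigma=\alpha-\gamma\beta$ gives
\begin{equation*}
0\leq u(x)-Kx_n^s\leq c^{-\gamma}c_\sigma\, x_n^{\beta},
\end{equation*}
which is \eqref{accurate-estimate} with $C$ uniform. The uniformity of $C$ thus reduces entirely to the uniformity of $c$ in (1).

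\emph{Main obstacle.} The real difficulty is part (1): getting a lower bound with a universal constant, independent of $K$ and of $u$. In particular, the Harnack/interior estimate step requires controlling $\sup_{B_{1/2}(e_n)}u_R$ against $u_R(e_n)$ using only superharmonicity. If a true Harnack inequality is unavailable, we will fall back on a weak Harnack (mean-value) lower bound for $s$-superharmonic functions. Combined with Jensen, this gives $\int_{B_{1/2}} u_R^{-\gamma}\geq |B_{1/2}|\,\bigl(\frac{1}{|B_{1/2}|}\int_{B_{1/2}} u_R\bigr)^{-\gamma}$, together with $\int_{B_{1/2}} u_R\lesssim u_R(e_n)$. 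We then close the inequality as above.
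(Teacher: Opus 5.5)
\textbf{Part (2): assuming the representation skips the main step.} You take \eqref{grn} as given. For a solution of \eqref{eq. main} in the pointwise sense, this representation is not an input to the lemma; it is what the lemma's proof has to establish. The introduction only says it ``can be verified'', and the verification is Step~2 of the paper's proof. The constant $K=K(u)$ in \eqref{accurate-estimate} is exactly the coefficient of the $s$-harmonic part in \eqref{grn}, so assuming \eqref{grn} assumes the substantive half of (2).

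Once \eqref{grn} is granted, your argument coincides with the paper's Step~1. With your $\beta=\frac{\alpha+2s}{1+\gamma}$, part (1) gives $z_n^\alpha u^{-\gamma}\le c^{-\gamma}z_n^{\beta-2s}$. After integrating out $z'$, the potential of $z_n^{\beta-2s}$ is homogeneous of degree $\beta$ and finite because $\beta<s$.

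What is missing is the following chain.
\begin{itemize}
\item Let $v$ be the Green potential of $z_n^\alpha u^{-\gamma}$. By (1) and that computation, $v\le Cx_n^\beta$, so $u-v$ is $s$-harmonic in $\mathbb{R}^n_+$, vanishes outside, and satisfies $u-v\ge -Cx_n^\beta$.
\item This does not make $u-v$ nonnegative, so the paper proves $w=u-v+(x_n)_+^s\ge 0$. Since $\beta<s$, $w$ is bounded below and can be negative only in a strip $\{0<x_n<H\}$. The sliding barrier $-h(1-\frac{x_n^2}{16H^2})_+^s\bigl(1-\frac{\varepsilon}{1+\varepsilon|x'-y'|^2}\bigr)$ has negative fractional Laplacian and rules out a negative infimum.
\item Alternatively, you can compare $u$ on $B_R^+$ with the Green potential of the same right-hand side and let $R\to\infty$ to get $u\ge v$ directly.
\item The Liouville theorem for nonnegative $s$-harmonic functions in the half space with zero exterior data then gives $u-v\equiv Kx_n^s$. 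Finally $K\ge 0$ because $u-v\ge -Cx_n^\beta$ and $\beta<s$.
\end{itemize}
In this order, (1) must be available before the representation, so your proof of (1) must not rest on \eqref{grn} either.

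\textbf{Part (1): a different route that works via your fallback.} The paper uses a barrier. The function $\phi=A\bigl(1-|2(x-he_n)/h|^2\bigr)_+^s$ has constant fractional Laplacian in $B_{h/2}(he_n)$. For $A=c\,h^{\beta}$ it is therefore a subsolution of the nonlinear equation there. The comparison principle of Lemma~\ref{lem. maximal principle}, which uses that $x_n^\alpha u^{-\gamma}$ is decreasing in $u$, then gives $u(he_n)\ge A$ with no Green function at all.

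Your primary step fails as stated. Positivity and superharmonicity do not give $\sup_{B_{1/2}(e_n)}u_R\le C\,u_R(e_n)$, and that is the direction you need to bound $u_R^{-\gamma}$ from below.

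Your fallback does close the argument. Since $u_R\ge 0$ on $\mathbb{R}^n$ and is $s$-superharmonic in $B_{1/2}(e_n)$, averaging the Poisson-kernel mean value inequality over radii $r\in(0,\rho)$ gives $u_R(e_n)\ge c\rho^{-n}\int_{B_\rho(e_n)}u_R$. Jensen for the convex function $t^{-\gamma}$, together with a positive lower bound for $G^n_+(e_n,z)z_n^\alpha$ on the ball, then yields $u_R(e_n)^{1+\gamma}\ge c$. To keep this independent of \eqref{grn}, use linear comparison with the Green function of $B_{1/2}(e_n)$ instead, and work on a smaller concentric ball where that kernel is bounded below.

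The blow-down contradiction and the preliminary bound $u\le Ax_n^s+B$ are unnecessary. Applying the inequality to $u_R$ with $R=x_n$, after a horizontal translation, gives $u(x)\ge c\,x_n^\beta$ directly. The barrier proof is shorter; yours needs only positivity, superharmonicity and convexity of the nonlinearity rather than an explicit subsolution.
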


Second, we apply the method of moving planes to the Kelvin transform of $u(x)$, see subsection~\ref{subsection: MMP}. This proves Theorem~\ref{thm. classification} (1), i.e., the one-dimensional symmetry of $u(x)$.
Finally, we construct global solutions through a limiting argument, which is fundamentally different from the ODE analysis in \cite{MMS,MMS1}. We let $U_{K,b}(t)$ be the solution to the problem \eqref{eq. slope solution in bounded interval} in a finite interval $[0,b]$, and show that $\displaystyle U_{K}(t)=\lim_{b\to\infty}U_{K,b}(t)$ is a global solution. Here, for the nonlocal equation, it is actually not a trivial fact that $(-\Delta)^{s}U_{K,b}$ converges to $(-\Delta)^{s}U_{K}$ even in the pointwise sense. To overcome such a difficulty, we obtain a version of the monotone convergence theorem (see Lemma~\ref{lem. monotone convergence}), and use the observation that $U_{K,b}(t)$ is decreasing in the index $b$. With these one-dimensional solutions, we can apply Lemma~\ref{lem. u is close to a linear function}, and see that if $u(x)$ satisfies \eqref{accurate-estimate}, then $u(x)\equiv U_{K}(x_{n})$ with the same asymptotic $s$-order slope $K$.

\section{Preliminaries}
In this section, we prove some preliminary lemmas.
\subsection{Monotone convergence lemma for \texorpdfstring{$(-\Delta)^s$}{Lg}}
We start with a monotone convergence lemma:
\begin{lemma}[Monotone convergence]\label{lem. monotone convergence}
    Let $\sigma>0$ be small and
    \begin{equation*}
        2s+\sigma\in(0,2).
    \end{equation*}
    Assume that a sequence of $C^{1,1}_{loc}(B_{1})$ functions $u_{k}:\mathbb{R}^{n}\to\mathbb{R}$ satisfies the uniform $\mathcal{L}_{2s}$ bound:
    \begin{equation*}
        \|u_{k}\|_{\mathcal{L}_{2s}(\mathbb{R}^{n})}\leq C,
    \end{equation*}
    and is pointwise increasing (or decreasing), meaning that
    \begin{equation*}
        u_{k}(x)\leq (\mbox{or }\geq)\ u_{k+1}(x),\quad\mbox{for all }x\in\mathbb{R}^{n}.
    \end{equation*}
    If $u_{k}(x)$ converges to $u_{\infty}(x)$ in $C^{2s+\sigma}_{loc}(B_{1})$ sense, then $u_{\infty}(x)$ satisfies the equation
    \begin{equation*}
        (-\Delta)^{s}u_{\infty}(x)=\lim_{k\to\infty}(-\Delta)^{s}u_{k}(x)\quad\mbox{in }B_{1}.
    \end{equation*}
\end{lemma}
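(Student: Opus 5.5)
Fix $x\in B_{1}$ and pick $r>0$ with $B_{2r}(x)\subset B_{1}$. I split the singular integral defining $(-\Delta)^{s}u_{k}(x)$ into a near part over $B_{r}(x)$ and a far part over $\mathbb{R}^{n}\setminus B_{r}(x)$:
\begin{equation*}
(-\Delta)^{s}u_{k}(x)=C_{n,s}\,PV\!\int_{B_{r}(x)}\frac{u_{k}(x)-u_{k}(y)}{|x-y|^{n+2s}}dy+C_{n,s}\int_{\mathbb{R}^{n}\setminus B_{r}(x)}\frac{u_{k}(x)-u_{k}(y)}{|x-y|^{n+2s}}dy .
\end{equation*}
The two parts need different limit theorems. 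The near part is handled by the local $C^{2s+\sigma}$ convergence. The far part is handled by monotonicity, which is what replaces the missing uniform control of the tails.

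For the near part, I symmetrize to $\frac12\int_{B_{r}}\frac{2u_{k}(x)-u_{k}(x+z)-u_{k}(x-z)}{|z|^{n+2s}}dz$. When $2s+\sigma>1$, the $C^{2s+\sigma}(\overline{B_{2r}(x)})$ bound controls the second difference by $[u_{k}]\,|z|^{2s+\sigma}$, after subtracting the gradient term, which cancels by symmetry. When $2s+\sigma\le1$, the Hölder seminorm controls the plain difference $|u_{k}(x)-u_{k}(y)|\le[u_{k}]|x-y|^{2s+\sigma}$. In either case the integrand is bounded by $C|z|^{\sigma-n}$, which is integrable on $B_{r}$. Applying the same bound to $u_{k}-u_{\infty}$, whose $C^{2s+\sigma}$ norm tends to $0$, shows that the near parts converge to the near part for $u_{\infty}$. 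In particular $u_{\infty}\in C^{2s+\sigma}$ near $x$, so its near integral is well defined.

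For the far part, the term $u_{k}(x)\int_{|y-x|\ge r}|x-y|^{-n-2s}dy$ converges because $u_{k}(x)\to u_{\infty}(x)$. The remaining term is $\int_{|y-x|\ge r}u_{k}(y)|x-y|^{-n-2s}dy$. In the increasing case, $u_{k}-u_{1}\ge0$ increases pointwise to $u_{\infty}-u_{1}$. The monotone convergence theorem, applied with the positive weight $|x-y|^{-n-2s}\chi_{\{|y-x|\ge r\}}$, then gives convergence of $\int (u_{k}-u_{1})|x-y|^{-n-2s}$ to $\int (u_{\infty}-u_{1})|x-y|^{-n-2s}$, and the $u_{1}$ term is finite since $u_{1}\in\mathcal{L}_{2s}$. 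Since $|x-y|^{-n-2s}\le C(r)(1+|y|^{n+2s})^{-1}$ on this region, the limit is bounded by $C(r)\sup_{k}\|u_{k}\|_{\mathcal{L}_{2s}}+C\|u_{1}\|_{\mathcal{L}_{2s}}<\infty$. The decreasing case is symmetric, using $u_{1}-u_{k}\ge0$.

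The main obstacle is the far part, because mere $\mathcal{L}_{2s}$ boundedness does not prevent mass escaping to infinity. The step to handle with care is verifying $u_{\infty}\in\mathcal{L}_{2s}$, so that $(-\Delta)^{s}u_{\infty}(x)$ is defined at all. Here Fatou's lemma together with monotonicity gives $\|u_{\infty}\|_{\mathcal{L}_{2s}}\le\liminf\|u_{k}\|_{\mathcal{L}_{2s}}\le C$: in the increasing case, $|u_{\infty}|\le |u_{1}|+(u_{\infty}-u_{1})$, and the monotone limit of $\int (u_{k}-u_{1})(1+|y|^{n+2s})^{-1}dy$ is bounded by $2C$. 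This also implies $u_{\infty}$ is finite almost everywhere away from $B_{1}$. Summing the near and far limits then yields $(-\Delta)^{s}u_{\infty}(x)=\lim_{k}(-\Delta)^{s}u_{k}(x)$.
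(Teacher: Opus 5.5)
Your proposal is correct and follows essentially the same route as the paper: the near part over $B_r(x)$ is controlled by the $C^{2s+\sigma}$ convergence of $u_k-u_\infty$, the far part by Lebesgue's monotone convergence theorem, and $u_\infty\in\mathcal{L}_{2s}$ is obtained from the monotone (or Fatou) limit. The only difference is cosmetic. The paper splits the far region further into a bounded annulus $B_R\setminus B_r(x)$, handled by monotone convergence, and a tail $B_R^c$, made uniformly small via $|u_k|\le|u_1|+|u_\infty|$, whereas you apply monotone convergence to the whole far region at once and use the uniform $\mathcal{L}_{2s}$ bound to keep the limit finite.
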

\begin{proof}
    We write $v_{k}=u_{k}-u_{\infty}$ and divide the expression of $(-\Delta)^{s}(u_{k}-u_{\infty})$ into several parts:
    \begin{align*}
        &(-\Delta)^{s}u_{k}(x)-(-\Delta)^{s}u_{\infty}(x)=C_{n,s}PV\int\frac{v_{k}(x)-v_{k}(y)}{|x-y|^{n+2s}}dy\\
        =&C_{n,s}\Big\{PV\int_{B_{r}(x)}+\int_{B_{R}\setminus B_{r}(x)}+\int_{B_{R}^{c}}\Big\}\frac{v_{k}(x)-v_{k}(y)}{|x-y|^{n+2s}}dy=:I_{1}+I_{2}+I_{3}.
    \end{align*}
    Here, we choose $r=\frac{1-|x|}{2}$, so that $B_{r}(x)$ is totally contained in $B_{1}$.
    
    For every $\delta>0$, we intend to show that $|(-\Delta)^{s}u_{k}(x)-(-\Delta)^{s}u_{\infty}(x)|\leq\delta$ for sufficiently large $k$. To do this, we first choose a sufficiently large outer radius $R$, such that
    \begin{equation*}
        \Big(|u_{1}(x)|+|u_{\infty}(x)|\Big)\int_{B_{R}^{c}}\frac{1}{|x-y|^{n+2s}}dy+\|u_{1}\|_{\mathcal{L}_{2s}(B_{R}^{c})}+\|u_{\infty}\|_{\mathcal{L}_{2s}(B_{R}^{c})}\leq\frac{\delta}{6C_{n,s}},
    \end{equation*}
    where we have used Lebesgue's Monotonic Convergence Theorem to make sure $u_{\infty}\in\mathcal{L}_{2s}(\mathbb{R}^{n})$. For such a large $R$, we are sure that $|I_{3}|\leq\delta/3$ for all $k$'s because:
    \begin{align*}
        |I_{3}|\leq&C_{n,s}\int_{B_{R}^{c}}\frac{|u_{k}(x)|+|u_{\infty}(x)|+|u_{k}(y)|+|u_{\infty}(y)|}{|x-y|^{n+2s}}dy\\
        \leq&C_{n,s}\int_{B_{R}^{c}}\frac{(|u_{1}(x)|+|u_{\infty}(x)|)+|u_{\infty}(x)|+(|u_{1}(y)|+|u_{\infty}(y)|)+|u_{\infty}(y)|}{|x-y|^{n+2s}}dy\\
        \leq&2C_{n,s}\int_{B_{R}^{c}}\frac{|u_{1}(x)|+|u_{\infty}(x)|}{|x-y|^{n+2s}}dy+2C_{n,s}\int_{B_{R}^{c}}\frac{|u_{1}(y)|+|u_{\infty}(y)|}{|x-y|^{n+2s}}dy\\
        =&2C_{n,s}\Big\{(|u_{1}(x)|+|u_{\infty}(x)|)\int_{B_{R}^{c}}\frac{1}{|x-y|^{n+2s}}dy+\|u_{1}\|_{\mathcal{L}_{2s}(B_{R}^{c})}+\|u_{\infty}\|_{\mathcal{L}_{2s}(B_{R}^{c})}\Big\}\\
        \leq&2C_{n,s}\cdot\frac{\delta}{6C_{n,s}}=\delta/3.
    \end{align*}

    We next compute $I_{1}$. We use the assumption
    \begin{equation*}
        v_{k}=u_{k}-u_{\infty}\to0\quad\mbox{in }C^{2s+\sigma}_{loc}(B_{1})\subseteq C^{2s+\sigma}(B_{r}(x)),
    \end{equation*}
    and naturally assume that there exists a sequence $h_{k}\to0$ such that:
    \begin{align*}
        \left\{\begin{aligned}
            &|v_{k}(x)-v_{k}(y)|\leq h_{k}|x-y|^{2s+\sigma}&\mbox{if }0<2s+\sigma\leq1\\
            &|v_{k}(x)-v_{k}(y)-(x-y)\cdot\nabla u(x)|\leq h_{k}|x-y|^{2s+\sigma}&\mbox{if }1<2s+\sigma<2
        \end{aligned}\right.
    \end{align*}
    for all $y\in B_{r}(x)$. Then, for sufficiently large $k$, we have
    \begin{equation*}
        |I_{1}|\leq C_{n,s}\int_{B_{r}(x)}\frac{h_{k}|x-y|^{2s+\sigma}}{|x-y|^{n+2s}}dy=C_{n,s,\sigma,r}\cdot h_{k}\leq\delta/3.
    \end{equation*}

    Finally, we compute $I_{2}$. As $k\to\infty$, we clearly have $v_{k}(x)\to0$. Besides, we apply the Lebesgue's Monotonic Convergence Theorem to $v_{k}$ and obtain that
    \begin{equation*}
        \int_{B_{R}\setminus B_{r}(x)}\frac{|v_{k}(y)|}{|x-y|^{n+2s}}dy\to0.
    \end{equation*}
    Therefore, for sufficiently large $k$, we have $|I_{2}|\leq\delta/3$. We then reach the convergence:
    \begin{equation*}
        |(-\Delta)^{s}u_{k}(x)-(-\Delta)^{s}u_{\infty}(x)|\leq|I_{1}|+|I_{2}|+|I_{3}|\leq\delta\quad\mbox{for sufficiently large }k.
    \end{equation*}
\end{proof}

\subsection{Well-posedness}
We now prove the uniqueness and existence of $(-\Delta)^{s}u=\frac{x_{n}^{\alpha}}{u^{\gamma}}$ in a bounded open domain. First, since the right-hand side of the equation is monotonic increasing in $u$, we easily obtain the following maximal principle.
\begin{lemma}[Maximal principle]\label{lem. maximal principle}
    Let $\Omega$ be a bounded open set in $\mathbb{R}^{n}_{+}$, and let $u,v\geq0$ be two $C^{1,1}_{loc}(\Omega)\cap C(\overline{\Omega})\cap\mathcal{L}_{2s}(\mathbb{R}^{n})$ functions such that $u\geq v$ in $\Omega^{c}$. If we further know
    \begin{equation*}
        (-\Delta)^{s}u\geq\frac{x_{n}^{\alpha}}{u^{\gamma}},\quad(-\Delta)^{s}v\leq\frac{x_{n}^{\alpha}}{v^{\gamma}}\quad\mbox{in }\Omega,
    \end{equation*}
    where $\gamma>0$, then $u\geq v$ in $\Omega$.
\end{lemma}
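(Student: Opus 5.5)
We argue by contradiction via the maximum point of $v-u$. Suppose the set $\{x\in\Omega: v(x)>u(x)\}$ is nonempty. Since $u,v\in C(\overline{\Omega})$, $\Omega$ is bounded, and $u\geq v$ in $\Omega^{c}$ (in particular on $\partial\Omega$ by continuity), the function $w:=v-u$ attains a positive maximum over $\mathbb{R}^{n}$ at some interior point $x_{0}\in\Omega$. Here we note that $w\leq 0$ on $\Omega^{c}$, so $\sup_{\mathbb{R}^{n}}w=\max_{\overline{\Omega}}w$, and this value is positive and not attained on $\partial\Omega$.

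At $x_{0}$ we compute the fractional Laplacian of $w$ directly from \eqref{eq1-1}. This is legitimate since $w\in C^{1,1}_{loc}(\Omega)\cap\mathcal{L}_{2s}$. Because $w(x_{0})\geq w(y)$ for all $y\in\mathbb{R}^{n}$, we get
\begin{equation*}
(-\Delta)^{s}w(x_{0})=C_{n,s}PV\int_{\mathbb{R}^{n}}\frac{w(x_{0})-w(y)}{|x_{0}-y|^{n+2s}}dy\geq0 .
\end{equation*}
In fact the inequality is strict: $w(x_{0})-w(y)\geq w(x_{0})>0$ on $\Omega^{c}$, which has positive measure. On the other hand, subtracting the two differential inequalities gives
\begin{equation*}
(-\Delta)^{s}w(x_{0})\leq\frac{(x_{0})_{n}^{\alpha}}{v(x_{0})^{\gamma}}-\frac{(x_{0})_{n}^{\alpha}}{u(x_{0})^{\gamma}} .
\end{equation*}

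Now $v(x_{0})>u(x_{0})\geq0$ and $(x_{0})_{n}>0$ since $\Omega\subset\mathbb{R}^{n}_{+}$. Because $t\mapsto t^{-\gamma}$ is strictly decreasing on $(0,\infty)$ for $\gamma>0$, the right-hand side is strictly negative. If $u(x_{0})=0$, the supersolution inequality for $u$ forces $(-\Delta)^{s}u(x_{0})=+\infty$, which is impossible for a $C^{1,1}$ function; so we may take $u>0$ where the inequality is meaningful. This contradicts $(-\Delta)^{s}w(x_{0})>0$. Hence $w\leq0$ in $\Omega$, i.e.\ $u\geq v$.

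The main point needing care is justifying the pointwise evaluation at $x_{0}$. Since both $u$ and $v$ are $C^{1,1}$ near $x_{0}$ and belong to $\mathcal{L}_{2s}$, the principal value integral converges for each of them separately, so by linearity $(-\Delta)^{s}w(x_{0})=(-\Delta)^{s}v(x_{0})-(-\Delta)^{s}u(x_{0})$. The same two facts also make the sign argument at the maximum valid. Everything else is elementary.
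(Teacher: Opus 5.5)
Your proof is correct and is essentially the paper's argument: the paper takes an interior negative minimum of $u-v$ (equivalently, your positive maximum of $v-u$), uses global extremality to get $(-\Delta)^{s}(u-v)\le 0$ there, and contradicts this with the strict monotonicity of $t\mapsto t^{-\gamma}$. Your extra care about the existence of the interior extremum, the possibility $u(x_0)=0$, and the pointwise well-definedness of $(-\Delta)^{s}w(x_0)$ covers details the paper leaves implicit.
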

\begin{proof}
    Suppose otherwise, then there exists an interior point $x^0\in\Omega$, such that
    \begin{equation*}
        u(x^0)-v(x^0)=\min_{x\in\Omega}\Big(u(x)-v(x)\Big)=-h<0.
    \end{equation*}
    On the one hand, since $w(x):=u(x)-v(x)\geq-h$ elsewhere, we have
    \begin{equation*}
        (-\Delta)^{s}w(x^0)=C_{n,s}PV\int\frac{w(x^0)-w(y)}{|x-y|^{n+2s}}dy\leq C_{n,s}PV\int\frac{0}{|x-y|^{n+2s}}dy=0.
    \end{equation*}
    On the other hand, as $u(x^0)<v(x^0)$, we have
    \begin{equation*}
        (-\Delta)^{s}w(x^0)=(-\Delta)^{s}u(x^0)-(-\Delta)^{s}v(x^0)\geq\frac{x_{n}^{\alpha}}{u(x^0)^{\gamma}}-\frac{x_{n}^{\alpha}}{v(x^0)^{\gamma}}>0.
    \end{equation*}
    Then, we have reached a contradiction.
\end{proof}
Using Lemma~\ref{lem. maximal principle}, we obtain a pointwise lower bound estimate by constructing a lower barrier.
\begin{lemma}[Pointwise lower bound]\label{lem. lower bound of a local solution}
Let $\gamma>0$ and $u$ be a solution of $(-\Delta)^{s}u=\frac{x_{n}^{\alpha}}{u^{\gamma}}$ in $B_{r}(h\vec{e_{n}})$ for some $0<r\leq h$. Then there exists a constant $c=c(n,s,\gamma,\alpha)$ such that
\begin{equation*}
    u(h\vec{e_{n}})\geq c \cdot h^{\frac{\alpha}{1+\gamma}}r^{\frac{2s}{1+\gamma}}.
\end{equation*}
\end{lemma}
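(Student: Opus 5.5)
We will build a lower barrier and compare it with $u$ using Lemma~\ref{lem. maximal principle}. Here $u$ is a positive solution in $B_{r}(h\vec{e_{n}})$; we use $u\geq0$ in $\mathbb{R}^{n}$, as in the global setting where $u\geq0$ everywhere. Since $0<r\leq h$, every $x$ in $B_{r/2}(h\vec{e_{n}})$ has $x_{n}\in[h/2,3h/2]$, so $x_{n}^{\alpha}\geq c_{\alpha}h^{\alpha}$ there, with $c_{\alpha}=\min\{2^{-\alpha},(3/2)^{\alpha}\}$. We therefore work on $\Omega=B_{r/2}(h\vec{e_{n}})$.

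\textbf{Barrier.} Take the torsion function $\phi(x)=(\rho^{2}-|x-h\vec{e_{n}}|^{2})_{+}^{s}$ with $\rho=r/2$. It satisfies $(-\Delta)^{s}\phi=c_{n,s}$ in $B_{\rho}(h\vec{e_{n}})$, where $c_{n,s}>0$ is explicit, and $\phi\leq\rho^{2s}$. Set $v=\varepsilon\phi$, with $\varepsilon>0$ chosen below. Then $v=0\leq u$ outside $\Omega$. In $\Omega$ we have
\begin{equation*}
(-\Delta)^{s}v=\varepsilon c_{n,s}
\quad\text{and}\quad
\frac{x_{n}^{\alpha}}{v^{\gamma}}\geq\frac{c_{\alpha}h^{\alpha}}{\varepsilon^{\gamma}\rho^{2s\gamma}}.
\end{equation*}
So $v$ is a subsolution, $(-\Delta)^{s}v\leq x_{n}^{\alpha}v^{-\gamma}$, as soon as $\varepsilon^{1+\gamma}c_{n,s}\leq c_{\alpha}h^{\alpha}\rho^{-2s\gamma}$. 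We choose
\begin{equation*}
\varepsilon=c\,h^{\frac{\alpha}{1+\gamma}}\rho^{-\frac{2s\gamma}{1+\gamma}}.
\end{equation*}

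\textbf{Comparison.} Lemma~\ref{lem. maximal principle} gives $u\geq v$ in $\Omega$. At the center $\phi(h\vec{e_{n}})=\rho^{2s}$, so
\begin{equation*}
u(h\vec{e_{n}})\geq\varepsilon\rho^{2s}=c\,h^{\frac{\alpha}{1+\gamma}}\rho^{\frac{2s}{1+\gamma}},
\end{equation*}
which is the claim, since $\rho=r/2$.

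\textbf{Main obstacle: regularity of $v$.} Lemma~\ref{lem. maximal principle} asks for $C^{1,1}_{loc}(\Omega)\cap C(\overline\Omega)$, and $v$ is smooth inside $\Omega$ and continuous up to the boundary, so this holds. The singular term $v^{-\gamma}$ blows up at $\partial\Omega$, but that does no harm: the subsolution inequality only becomes easier there.

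The real concern is the contradiction step in Lemma~\ref{lem. maximal principle}, which needs an interior minimum of $u-v$. That is fine here because $u-v$ is continuous on $\overline\Omega$ with $u-v\geq0$ on $\partial\Omega$ (positivity and continuity of $u$). So a negative minimum is attained at an interior point, and the argument goes through.
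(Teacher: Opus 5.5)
Your proposal is correct and is essentially the paper's proof: the paper also rescales the torsion function $(1-|x|^2)_+^s$ to $B_{r/2}(h\vec{e_{n}})$, bounds $x_n^{\alpha}$ below by a constant times $h^{\alpha}$ using $x_n\in[h/2,3h/2]$, and applies Lemma~\ref{lem. maximal principle} with $u\geq 0$ outside the ball, obtaining $u(h\vec{e_{n}})\geq A$ with an amplitude $A$ equal to your $\varepsilon\rho^{2s}$. Your remarks on why the comparison is legitimate (continuity of $u$ on $\overline{B_{r/2}(h\vec{e_{n}})}\subset B_r(h\vec{e_{n}})$, and the negative minimum being attained in the interior) are fine, and slightly more explicit than the paper.
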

\begin{proof}
Let $\psi(x)=(1-|x|^2)_{+}^{s}$, then $(-\Delta)^{s}\psi(x)\equiv b$ in $B_{1}$ for some constant $b>0$. Denote
\begin{equation*}
    \phi(x)=A\cdot\psi(\frac{x-h\vec{e_{n}}}{r/2}),\quad\mbox{with }A=\left(\frac{2^{-2s-|\alpha|}h^{\alpha}r^{2s}}{b}\right)^\frac{1}{1+\gamma}.
\end{equation*}
Then, we have for $x\in B_{r/2}(h\vec{e_{n}})$
\begin{equation*}
(-\Delta)^s \phi(x)=(\frac{r}{2})^{-2s}A\cdot b=\frac{2^{-|\alpha|}h^{\alpha}}{A^\gamma}\leq \frac{\min_{y\in B_{r/2}(h\vec{e_{n}})}y_{n}^{\alpha}}{A^\gamma}\leq\frac{x_{n}^{\alpha}}{\phi^\gamma}.
\end{equation*}
Notice that $u(x)\geq0=\phi(x)$ outside $B_{r/2}(h\vec{e_{n}})$, so by Lemma~\ref{lem. maximal principle}, we see
\begin{equation*}
    u(h\vec{e_{n}})\geq\phi(h\vec{e_{n}})=A=\left(\frac{2^{-2s-|\alpha|}h^{\alpha}r^{2s}}{b}\right)^\frac{1}{1+\gamma}=c(n,s,\gamma,\alpha)\cdot h^{\frac{\alpha}{1+\gamma}}r^{\frac{2s}{1+\gamma}}.
\end{equation*}
\end{proof}
Now we prove the well-posedness of the equation $\displaystyle(-\Delta)^{s}u=\frac{x_n^{\alpha}}{u^{\gamma}}$ in a bounded open domain.
\begin{lemma}[Well-posedness]\label{lem. well posedness}
    Let $\Omega=B_{r}^{+}$ in $\mathbb{R}^{n}_{+}$. Let $\varphi\geq0$ be a continuous function in $\mathbb{R}^{n}_{+}$ with $\varphi\equiv0$ in $\overline{\mathbb{R}^{n}_{-}}$, and assume that
    \begin{equation*}
        \int_{\mathbb{R}^{n}}\frac{\varphi(x)}{1+|x|^{n+2s}}dt<\infty.
    \end{equation*}
    Then there exists a unique solution $u\in C^{1,1}_{loc}(\Omega)$ satisfying $(-\Delta)^{s}u(x)=\frac{x_{n}^{\alpha}}{u(x)^{\gamma}}$ in $\Omega$, and $u$ continuously extends to $\varphi$ outside.
\end{lemma}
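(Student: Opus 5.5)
Uniqueness follows directly from Lemma~\ref{lem. maximal principle}. If $u_1,u_2$ are two solutions with the same exterior datum $\varphi$, then each is simultaneously a super- and subsolution and $u_1=u_2$ in $\Omega^c$. Applying the lemma twice gives $u_1\geq u_2$ and $u_2\geq u_1$ in $\Omega$. For existence I will use Perron's method / monotone iteration between an ordered pair of barriers, after regularizing the singular nonlinearity.

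\emph{Barriers.} For the subsolution I take $\underline{u}=v_\varphi$, the $s$-harmonic function in $\Omega$ with exterior data $\varphi$, which is nonnegative by the maximum principle. Then $(-\Delta)^s\underline{u}=0\leq x_n^\alpha/\underline{u}^\gamma$. The singularity is dangerous, so I also need a positive interior lower bound. Lemma~\ref{lem. lower bound of a local solution} shows that any solution satisfies $u(x)\geq c\,x_n^{\alpha/(1+\gamma)}d(x)^{2s/(1+\gamma)}$ locally, where $d(x)=\min\{x_n,\operatorname{dist}(x,\partial B_r)\}$. The same local bump $\phi$ built in that proof is a subsolution, and I will take the maximum of $v_\varphi$ with such bumps. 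For the supersolution I take $\overline{u}=v_\varphi+w$, where $w$ solves $(-\Delta)^s w=x_n^\alpha\,(c\,\rho(x))^{-\gamma}$ in $\Omega$ with $w=0$ outside. Here $\rho$ is the explicit lower barrier just described.

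\emph{Construction.} Define $f_\varepsilon(x,u)=x_n^\alpha/(u+\varepsilon)^\gamma$, truncated near $\partial\Omega$ (for instance, restrict to $\Omega_\varepsilon=\{x\in\Omega: d(x)>\varepsilon\}$ and solve there). This nonlinearity is bounded and decreasing in $u$. The problem $(-\Delta)^s u=f_\varepsilon(x,u)$ in $\Omega_\varepsilon$ with $u=\varphi$ outside therefore has a solution, obtained by Schauder fixed point or by iteration in the ordered interval $[v_\varphi,\overline u]$. The comparison Lemma~\ref{lem. maximal principle}, applied with the monotone nonlinearity, makes the $u_\varepsilon$ monotone in $\varepsilon$ and keeps them between the barriers.

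\emph{Passage to the limit.} Interior positivity gives $x_n^\alpha/u_\varepsilon^\gamma$ locally bounded on compact subsets of $\Omega$. Standard interior estimates for $(-\Delta)^s$, using the uniform $\mathcal{L}_{2s}$ bound coming from $\varphi$ and $\overline u$, then give uniform $C^{2s+\sigma}_{loc}$ bounds, and these bootstrap to $C^{1,1}_{loc}$. Since the $u_\varepsilon$ are monotone, Lemma~\ref{lem. monotone convergence} lets us pass to the limit in the equation. Continuity up to $\partial\Omega$ and attainment of $\varphi$ come from squeezing between $v_\varphi$ and $v_\varphi+w$. This needs $w\to0$ at $\partial\Omega$, which in turn requires $x_n^\alpha\rho^{-\gamma}$ to be integrable against the Green function of $B_r^+$.

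\textbf{Main obstacle.} The hardest step is this last boundary integrability, especially near the flat boundary $\{x_n=0\}$ when $\alpha$ is very negative or $\gamma$ is large. If the crude barrier fails there, I would first try a sharper supersolution of the form $v_\varphi+C\,x_n^{\beta}$ with $\beta=\frac{\alpha+2s}{1+\gamma}$, cut off suitably. This mimics the homogeneous profile $U_0$ and exploits $\beta\in(0,s)$ under \eqref{eq. existence requirement}.
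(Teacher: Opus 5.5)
Your argument is correct in outline, but it follows a different route from the paper's.

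\textbf{Comparison with the paper.} The paper never builds an upper barrier. It replaces the datum by $\varphi+\varepsilon$, so every approximate solution is $\geq\varepsilon$ and the right-hand side is at most $\varepsilon^{-\gamma}x_n^{\alpha}$. It solves that problem by a continuity method in a parameter $\lambda$ multiplying the nonlinearity, and then lets $\varepsilon\downarrow0$. Since those $u_\varepsilon$ \emph{decrease}, the upper half of the boundary behaviour comes for free: $u_0\le u_{\varepsilon/2}$, and $u_{\varepsilon/2}$ is continuous with boundary values $\varphi+\varepsilon/2$. The lower half uses the $s$-harmonic replacement of $\varphi$, i.e.\ your $v_\varphi$. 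Interior positivity and the passage to the limit rely on Lemma~\ref{lem. lower bound of a local solution} and Lemma~\ref{lem. monotone convergence}, as in your plan. You regularize the nonlinearity instead, so your approximations \emph{increase}, and the boundary behaviour must come from an explicit supersolution. In exchange, you replace the linearization/implicit-function step by a Schauder argument with bounded right-hand side, and you get a quantitative boundary modulus $0\le u-v_\varphi\le w$.

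\textbf{The flagged obstacle closes.} It works under the hypothesis $\alpha>-2s$, which the paper also uses. That hypothesis is needed already for the potential of $x_n^\alpha$ to vanish on $\{x_n=0\}$, and it is necessary by the first part of the proof of Theorem~\ref{thm. nonexist}. Set $\beta=\frac{\alpha+2s}{1+\gamma}$ and $d=\operatorname{dist}(\cdot,\partial\Omega)$. Then $x_n^{\alpha}\rho^{-\gamma}=x_n^{\alpha/(1+\gamma)}d^{\,2s/(1+\gamma)-2s}\le C\,d^{\,\mu-2s}$ with $\mu=\min\{\beta,\frac{2s}{1+\gamma}\}>0$; use $x_n\ge d$ if $\alpha<0$ and $x_n\le r$ if $\alpha\ge0$. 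The Green potential of $d^{\,\mu-2s}$ vanishes on $\partial\Omega$ precisely because $\mu>0$. Near the flat part, where $d=x_n$ and the density is $x_n^{\beta-2s}$, this is the one-dimensional computation behind \eqref{grn3}, restricted to a bounded interval so that no condition at infinity is needed. It gives $w=O(x_n^{\min\{\beta,s\}})$, with a logarithm if $\beta=s$.

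So you can drop the fallback. As written it would not suffice anyway: it relies on $\beta<s$, while the lemma is also used for $\alpha\ge(\gamma-1)s$, i.e.\ $\beta\ge s$, in the proof of Theorem~\ref{thm. nonexist}.

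\textbf{Two points need repair.}

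First, $\overline u$ is a supersolution only if $\overline u\ge c\rho$, which you do not check. Obtain it by running the bump comparison of Lemma~\ref{lem. lower bound of a local solution} on $w$ itself. On $B_{d(x)/2}(x)$ one has $(-\Delta)^s w\ge C_1c^{-\gamma}x_n^{\alpha/(1+\gamma)}d(x)^{-2s\gamma/(1+\gamma)}$. Hence $w(x)\ge C_0c^{-\gamma}\rho(x)\ge c\rho(x)$ once $c^{1+\gamma}\le C_0$. Then, since $f_\varepsilon\le x_n^{\alpha}u^{-\gamma}$, each $u_\varepsilon$ is a subsolution of the singular equation, and Lemma~\ref{lem. maximal principle} gives $u_\varepsilon\le\overline u$.

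Second, do not truncate by solving in $\Omega_\varepsilon$ with datum $\varphi$ on $\Omega\setminus\Omega_\varepsilon$. That imposes the irrelevant interior values of $\varphi$, and both the monotonicity in $\varepsilon$ and the comparisons with $v_\varphi$ and $\overline u$ are lost. Instead keep the domain $\Omega$ and multiply the nonlinearity by a smooth cutoff $\eta_\varepsilon$ that increases as $\varepsilon\downarrow0$. Then $f_\varepsilon$ increases pointwise, and so does $u_\varepsilon$, by the same comparison argument with the strong maximum principle where $\eta_\varepsilon=0$.
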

\begin{proof}
    The uniqueness is guaranteed by Lemma~\ref{lem. maximal principle}, so we focus on the existence part. The proof is divided into two parts: we first prove the existence of a solution $u_{\varepsilon}$ if the boundary condition $\varphi$ is replaced by $\varphi+\varepsilon$, then we send $\varepsilon$ to zero and find a solution $u$.
    \begin{itemize}
        \item Step 1: Let $\varphi_{\varepsilon}=\varphi+\varepsilon$ for $\varepsilon>0$. We use the continuity method to construct $u_{\varepsilon}$, meaning that we intend to find a family solutions $v_{\varepsilon,\lambda}$ ($\lambda\in[0,1]$) to the problem:
        \begin{equation}\label{eq. continuity method problem}
            \left\{\begin{aligned}
                &(-\Delta)^{s}v_{\varepsilon,\lambda}(x)=\lambda\frac{x_{n}^{\alpha}}{v_{\varepsilon,\lambda}(x)^{\gamma}}&\mbox{in }&\Omega\\
                &v_{\varepsilon,\lambda}(x)=\varphi_{\varepsilon}&\mbox{in }&\Omega^{c}
            \end{aligned}\right..
        \end{equation}
        The existence of $v_{\varepsilon,0}$ is obvious, and if some $v_{\varepsilon,\lambda}$ exists, we always have $v_{\varepsilon,\lambda}\geq\varepsilon$ since it is fractional subharmonic in $I$. Let $\mathcal{S}\subseteq[0,1]$ be defined as follows:
        \begin{equation*}
            \mathcal{S}:=\{\Lambda\in[0,1]:\mbox{ for all }\lambda\in[0,\Lambda],\ \eqref{eq. continuity method problem}\mbox{ admits a solution}\}.
        \end{equation*}
        It is clear that either $\mathcal{S}=[0,\Lambda)$, or $\mathcal{S}=[0,\Lambda]$. Here, $\Lambda\in[0,1]$, and we treat $[0,0)=\emptyset$, $[0,0]=\{0\}$.
        
        From the existence of $v_{\varepsilon,0}$, we see $\mathcal{S}$ is non-empty as it at least includes $0$.
        
        Next, we show that $\mathcal{S}$ is open with respect to the induced topology of $[0,1]$. To see this, we need to show that if $[0,\Lambda]\subseteq\mathcal{S}$ for some $0\leq\Lambda<1$, then $[0,\Lambda+\delta]\subseteq\mathcal{S}$ for some $\delta>0$. The idea is to use the implicit function theorem. By taking the differential of \eqref{eq. continuity method problem} in $\lambda$, we have
        \begin{equation}\label{eq. continuity derivative equation}
            \left\{\begin{aligned}
                &(-\Delta)^{s}\frac{d v_{\varepsilon,\lambda}}{d\lambda}(x)=\frac{x_{n}^{\alpha}}{v_{\varepsilon,\lambda}(x)^{\gamma}}&\mbox{in }&\Omega\\
                &\frac{d v_{\varepsilon,\lambda}}{d\lambda}(x)=0&\mbox{in }&\Omega^{c}
            \end{aligned}\right..
        \end{equation}
        As we already know $v_{\varepsilon,\lambda}\geq\varepsilon$ everywhere, we see the right-hand side $\frac{x_{n}^{\alpha}}{v_{\varepsilon,\lambda}(x)^{\gamma}}$ of \eqref{eq. continuity derivative equation} is bounded from above by $C x_{n}^{\alpha}$. Since we assume $\alpha>-2s$, we see the derivative $\frac{d v_{\varepsilon,\lambda}}{d\lambda}(x)$ must exist and is bound and positive. Therefore, we can make sure that \eqref{eq. continuity method problem} is solvable when $\lambda$ is in the vicinity of $\Lambda$.

        Finally, we show that $\mathcal{S}$ is closed. In particular, if $[0,\Lambda)\subseteq\mathcal{S}$ for some $\Lambda\in[0,1]$, we need to show that \eqref{eq. continuity method problem} is solvable for $\lambda=\Lambda$. We let $\Lambda_{k}=\Lambda-k^{-1}$, then $\Lambda_{k}\in[0,\Lambda)\subseteq\mathcal{S}$ and $\Lambda_{k}\to\Lambda$. Since \eqref{eq. continuity derivative equation} implies $\frac{d v_{\varepsilon,\lambda}}{d\lambda}(x)\geq0$, we have that $v_{\varepsilon,\Lambda_{k}}(x)$ is pointwise increasing with respect to $k$. Therefore, a pointwise convergence of $v_{\varepsilon,\Lambda_{k}}(x)$ to some function $u(x)$ is guaranteed. It then suffices to show that the pointwise limit $u(x)$ solves \eqref{eq. continuity method problem} for $\lambda=\Lambda$, i.e.. $u(x)=v_{\varepsilon,\Lambda}(x)$.

        For each $k$, we use the fact that $v_{\varepsilon,\Lambda_{k}}\geq\varepsilon$ to bound the right-hand side $\lambda\frac{x_{n}^{\alpha}}{v_{\varepsilon,\lambda}(x)^{\gamma}}$ of \eqref{eq. continuity method problem} uniformly. From the assumption $\alpha>-2s$, we have the uniform estimate:
        \begin{equation*}
            \|v_{\varepsilon,\Lambda_{k}}-v_{\varepsilon,0}\|_{C^{2\sigma}_{0}(\overline{\Omega})}\leq C_{s,\alpha,b,\varepsilon}.
        \end{equation*}
        In particular, passing to the limit, we see that $u(x)$ extends continuously to $\varphi_{\varepsilon}$ outside $\Omega$. Here, $\sigma>0$ is some sufficiently small exponent and we assume that
        \begin{equation*}
            2s+2\sigma\in(0,1)\cup(1,2)
        \end{equation*}
        for simplicity. Such an estimate also improves the regularity of $\lambda\frac{x_{n}^{\alpha}}{v_{\varepsilon,\lambda}(x)^{\gamma}}$, and we have the following uniform $C^{2s+2\sigma}_{loc}(\Omega)$ estimate:
        \begin{equation}\label{eq. approaching sequence local uniform bound}
            \|v_{\varepsilon,\Lambda_{k}}\|_{C^{2s+2\sigma}(\Omega')}\leq C_{s,\alpha,b,\varepsilon}(\Omega'),\quad\mbox{for all }\Omega'\subset\subset\Omega.
        \end{equation}
        With such an estimate, we show that $u(x)$ solves the PDE in \eqref{eq. continuity method problem} for $\lambda=\Lambda$. We apply Lemma~\ref{lem. monotone convergence} near each interior point of $\Omega$, in which we have chosen a subsequence of $v_{\varepsilon,\Lambda_{k}}$, so that \eqref{eq. approaching sequence local uniform bound} implies that the subsequence converges to $u(x)$ in $C^{2s+\sigma}_{loc}$. This means, up to a subsequence,
        \begin{equation}\label{eq. limit equation computation example}
            (-\Delta)^{s}u(x)=\lim_{k\to\infty}(-\Delta)^{s}v_{\varepsilon,\Lambda_{k}}(x)=\lim_{k\to\infty}\Lambda_{k}\frac{x_{n}^{\alpha}}{v_{\varepsilon,\lambda}(x)^{\gamma}}=\Lambda\frac{x_{n}^{\alpha}}{u(x)^{\gamma}}\mbox{ for }x\in\Omega.
        \end{equation}
        Therefore, $u(x)$ solves \eqref{eq. continuity method problem} and $u=v_{\varepsilon,\Lambda_{k}}$.

        Since $0\in\mathcal{S}$, and $\mathcal{S}$ is both open and closed with respect to the topology in $[0,1]$, we see $v_{\varepsilon,1}=u_{1}$ exists.
        \item Step 2: Now we send $\varepsilon\to0$. By Lemma~\ref{lem. maximal principle}, $u_{\varepsilon}$ is pointwise decreasing as $\varepsilon$ decreases, so $u_{\varepsilon}$ has a pointwise limit $u_{0}$. We need to show that $u_{0}$ is a solution to $(-\Delta)^{s}u=\frac{x_{n}^{\alpha}}{u(x)^{\gamma}}$ in $\Omega$, and $u$ continuously extends to $\varphi$. First, using Lemma~\ref{lem. lower bound of a local solution}, we have a pointwise upper bound of $\frac{x_{n}^{\alpha}}{u(x)^{\gamma}}$, which yields a uniform interior local estimate for $u_{\varepsilon}$:
        \begin{equation*}
            \|u_{\varepsilon}\|_{C^{2s+2\sigma}(\Omega')}\leq C_{s,\alpha,b,\varepsilon}(\Omega'),\quad\mbox{for all }\Omega'\subset\subset\Omega.
        \end{equation*}
        Then, up to a subsequence, we have $u_{\varepsilon}\to u_{0}$ in $C^{2s+\sigma}_{0}(\Omega)$ sense. Using Lemma~\ref{lem. monotone convergence},
        \begin{equation*}
            (-\Delta)^{s}u_{0}(x)=\lim_{k\to\infty}(-\Delta)^{s}u_{\varepsilon}(x)=\lim_{k\to\infty}\frac{x_{n}^{\alpha}}{u_{\varepsilon}(x)^{\gamma}}=\frac{x_{n}^{\alpha}}{u_{0}(x)^{\gamma}}\mbox{ for }x\in\Omega.
        \end{equation*}

        Next, since $u_{\varepsilon}$ are all fractional sub-harmonic, we can bound all $u_{\varepsilon}$'s from below using the harmonic replacement of $\varphi$:
        \begin{equation*}
            \left\{\begin{aligned}
                &(-\Delta)^{s}h=0&\mbox{in }&\Omega\\
                &h=\varphi&\mbox{in }&\Omega^c\\
            \end{aligned}\right..
        \end{equation*}
        It then follows that
        \begin{equation*}
            \liminf_{x\to y\in\partial\Omega}u_{0}(x)\geq\liminf_{\varepsilon\to0}\liminf_{x\to y\in\partial\Omega}u_{\varepsilon}(x)\geq\liminf_{x\to y\in\partial\Omega}h(x)\geq\varphi(y).
        \end{equation*}
        Conversely, we show
        \begin{equation*}
            \limsup_{x\to y\in\partial\Omega}u_{0}(x)\leq\varphi(y).
        \end{equation*}
        For every fixed $y\in\partial\Omega$ and $\varepsilon>0$, by the continuity of $u_{\varepsilon/2}(x)$, there exists some $\delta>0$ such that
        \begin{equation}
            \Big|u_{\varepsilon/2}(x)-\big(\varphi(y)+\frac{\varepsilon}{2}\big)\Big|\leq\frac{\varepsilon}{2},\quad\mbox{if }|x-y|\leq\delta.
        \end{equation}
        Then, for $|x-y|\leq\delta$, we have
        \begin{equation}
            u_{0}(x)-\varphi(y)\leq u_{\varepsilon/2}(x)-\varphi(y)\leq\varepsilon.
        \end{equation}
        Then we see $u=u_{0}$ solves $(-\Delta)^{s}u(x)=\frac{x_{n}^{\alpha}}{u(x)^{\gamma}}$ in $\Omega$, and $u$ continuously extends to $\varphi$ outside.
    \end{itemize}
\end{proof}

\section{Existence and non-existence}
\subsection{Existence}
\begin{lemma}\label{lem. homogeneous solution derivative expression}  (\cite[Lemma 9.2.2]{CLM2020})
For $t\in\mathbb{R}$, let $t^{+}:=\max\{t,0\}$. Then for $0<\beta<2s$, we have
\begin{equation*}
(-\Delta)_1^s(t_+^\beta)=K_\beta t_+^{\beta-2s}\ \mb{in}\ t>0,
\end{equation*}
where
\begin{eqnarray}
K_\beta:=C_{1,s}\int_0^1\frac{(
{\tau}^{s}-{\tau}^\beta)(1-{\tau}^{s-\beta-1})}{(1-\tau)^{1+2s}}d\tau\ \left\{\begin{array}{ll}>0, &\mb{if }\ 0<\beta<s,\\ =0, &\mb{if }\ \beta=s,\\ <0, &\text{if }\ s<\beta<2s.\end{array}\right.
\end{eqnarray}
On the other hand, when $\beta\geq2s$, then $t_+^\beta$ is not in $\mathcal{L}_{2s}(\mathbb{R})$, and $(-\Delta)_1^s(t_+^\beta)$ is not well-defined.
\end{lemma}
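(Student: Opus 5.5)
We compute $(-\Delta)^s_1(t_+^\beta)$ at a point $t>0$ directly from the definition \eqref{eq1-1} with $n=1$, and scaling does most of the work. Since $t_+^\beta$ is $\beta$-homogeneous and the kernel $|t-y|^{-1-2s}$ is $(-1-2s)$-homogeneous, the substitution $y=t\eta$ gives
\begin{equation*}
(-\Delta)_1^s(t_+^\beta)(t)=t^{\beta-2s}\,(-\Delta)_1^s(\eta_+^\beta)\big|_{\eta=1}.
\end{equation*}
So it suffices to set $K_\beta:=(-\Delta)^s_1(\eta_+^\beta)(1)$ and evaluate this constant. For $\beta<2s$ the integral converges at infinity, since $\eta^\beta/\eta^{1+2s}$ is integrable there. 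Near $\eta=1$ the principal value is finite because $\eta^\beta$ is smooth. Near $\eta=0$ there is no issue. When $\beta\ge 2s$, we have $\int^\infty \eta^{\beta}/(1+\eta^{1+2s})\,d\eta=\infty$, so $t_+^\beta\notin\mathcal{L}_{2s}$, which proves the last claim.

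To get the stated formula, split the real line into $(-\infty,0)$, $(0,1)$ and $(1,\infty)$. On $(1,\infty)$, substitute $\eta=1/\tau$, which gives $d\eta=\tau^{-2}d\tau$ and $|1-\eta|^{-1-2s}=\tau^{1+2s}(1-\tau)^{-1-2s}$. This folds the interval onto $(0,1)$ with integrand $(1-\tau^{-\beta})\tau^{2s-1}(1-\tau)^{-1-2s}$. On $(-\infty,0)$ the integrand is $|1-\eta|^{-1-2s}$, and its integral equals $1/(2s)$. Rather than handling this constant separately, we subtract the same decomposition applied to $\eta_+^s$, whose fractional Laplacian vanishes in $t>0$. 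This is the known fact that $x_+^s$ is $s$-harmonic in the half line; it is also the statement $K_s=0$. The $(-\infty,0)$ contributions are both $1/(2s)$ and cancel exactly. After combining over the common denominator $(1-\tau)^{1+2s}$, the remaining algebra should factor as
\begin{equation*}
(\tau^s-\tau^\beta)(1-\tau^{s-\beta-1}),
\end{equation*}
possibly after an extra symmetrization $\tau\mapsto$ (same) that pairs the terms $\tau^{2s-1-\beta}$ and $\tau^{\beta}$. The main bookkeeping obstacle is this step: the principal value near $\tau=1$ must be handled by the cancellation. The factored integrand vanishes to second order at $\tau=1$, since each factor vanishes linearly. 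It is therefore absolutely integrable against $(1-\tau)^{-1-2s}$ because $2s<2$, which justifies dropping the PV.

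The sign is read off from the factored integrand. For $0<\beta<s$ and $\tau\in(0,1)$, we have $\tau^s<\tau^\beta$, so the first factor is negative. Also $s-\beta-1<0$ gives $\tau^{s-\beta-1}>1$, so the second factor is negative as well. The product is positive, hence $K_\beta>0$. For $\beta=s$ the first factor vanishes identically, so $K_s=0$. For $s<\beta<2s$ the first factor is positive, while $s-\beta-1<-1<0$ keeps the second factor negative, giving $K_\beta<0$. Near $\tau=0$ the product behaves like $\tau^{2s-\beta-1}$ or better, which is integrable exactly because $\beta<2s$; this matches the convergence condition at infinity.
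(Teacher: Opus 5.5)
The paper gives no proof of this lemma; it is quoted from \cite[Lemma 9.2.2]{CLM2020}, so there is no internal argument to compare with. Your direct computation is correct and yields exactly the stated integrand. Scaling, together with the inversion $\eta=1/\tau$ on $(1,\infty)$, gives, for every $\beta\in(0,2s)$,
\[
\frac{K_\beta}{C_{1,s}}=\frac{1}{2s}+\int_0^1\frac{1-\tau^\beta+\tau^{2s-1}-\tau^{2s-1-\beta}}{(1-\tau)^{1+2s}}\,d\tau .
\]
Subtracting the same identity at $\beta=s$ leaves the numerator $\tau^s-\tau^\beta+\tau^{s-1}-\tau^{2s-1-\beta}$. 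Expanding shows this is exactly $(\tau^s-\tau^\beta)(1-\tau^{s-\beta-1})$. So delete the hedge ``should factor \dots possibly after an extra symmetrization'' and state the identity; no further manipulation is needed. The factor $\tau^s-\tau^\beta$ in the stated formula suggests the cited source proceeds the same way, by subtracting the $s$-harmonic profile $t_+^s$.

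There are a few points to tighten.

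\textbf{The principal value.} After inversion, the excised set is $(1-\varepsilon,1)$ on the $(0,1)$ piece but $(\frac{1}{1+\varepsilon},1)$ on the folded piece. The mismatch is an interval of length $\frac{\varepsilon^2}{1+\varepsilon}$, on which the folded integrand is $O(\varepsilon^{-2s})$. It therefore contributes $O(\varepsilon^{2-2s})\to0$. This estimate, together with the second-order vanishing of the combined numerator at $\tau=1$, is what actually justifies dropping the PV.

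\textbf{The external input.} Your only external fact is $(-\Delta)^s_1t_+^s=0$ on $t>0$. You can make the argument self-contained, because this fact is equivalent to
\[
\frac{1}{2s}+\int_0^1\frac{(1-\tau^s)(1-\tau^{s-1})}{(1-\tau)^{1+2s}}\,d\tau=0 .
\]
The integrand here is exactly the derivative of $\frac{(1-\tau^s)^2}{2s(1-\tau)^{2s}}$. That function equals $\frac{1}{2s}$ at $\tau=0$ and tends to $0$ as $\tau\to1^-$, since $s<1$, which proves the identity.

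\textbf{Behaviour near $\tau=0$.} The factored integrand behaves like $\tau^{\min\{s-1,\,2s-\beta-1\}}$. For $\beta<s$ this is $\tau^{s-1}$, which is more singular than $\tau^{2s-\beta-1}$, so ``or better'' is inaccurate there. Integrability is unaffected.

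Your sign analysis (using $C_{1,s}>0$) and the $\mathcal{L}_{2s}$ argument for $\beta\ge 2s$ are fine.
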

With Lemma~\ref{lem. homogeneous solution derivative expression}, we are able to find a homogeneous solution to \eqref{eq. main} when \eqref{eq. existence requirement} holds.
\begin{proof}[Proof of Theorem~\ref{thm. construction} (1)]
    Let $K_\beta$ be the constant given in Lemma~\ref{lem. homogeneous solution derivative expression}. If \eqref{eq. existence requirement} holds, then
    \begin{equation}\label{particular soln}
U_{0}(t)={C}_{\gamma,s,\alpha}t_+^{\frac{\alpha+2s}{\gamma+1}},\quad C_{\gamma,s,\alpha}=(K_{\frac{\alpha+2s}{\gamma+1}})^{-\frac{1}{\gamma+1}},
\end{equation}
is a homogeneous one-dimensional solution to \eqref{eq. main}.
\end{proof}
The construction of other solutions in Theorem~\ref{thm. construction} (2) is more involved.
\begin{proof}[Proof of Theorem~\ref{thm. construction} (2)]
    We define a sequence of functions $U_{K,b}(t)$ satisfying
    \begin{equation}\label{eq. slope solution in bounded interval}
        \left\{\begin{aligned}
            &(-\Delta)^{s}U_{K,b}(t)=\frac{t^{\alpha}}{U_{K,b}(t)^{\gamma}}&\mbox{in }(0,b)\\
            &U_{K,b}(t)=U_{0}(t)+K\cdot(t_{+})^{s}&\mbox{elsewhere}
        \end{aligned}\right.
    \end{equation}
    Since $K\cdot(t_{+})^{s}$ is fractional harmonic, it is a sub-solution to \eqref{eq. slope solution in bounded interval}, and since
    \begin{equation*}
        (-\Delta)^{s}U_{0}(t)+K\cdot(t_{+})^{s}=(-\Delta)^{s}U_{0}(t)=\frac{t^{\alpha}}{U_{0}(t)^{\gamma}}\geq\frac{t^{\alpha}}{(U_{0}(t)+K\cdot(t_{+})^{s})^{\gamma}},
    \end{equation*}
    we see $(U_{0}(t)+Kt^{s})_{+}$ is a super-solution to \eqref{eq. slope solution in bounded interval}. In conclusion, we have for every $t\geq0$ that
    \begin{equation}\label{eq. bounded slope solution also bounded}
        Kt^{s}\leq U_{K,b}(t)\leq Kt^{s}+U_{0}(t).
    \end{equation}
    
    We next notice that $U_{K,b}(t)$ is pointwise decreasing in $b$. In fact, we arbitrarily choose $b_{1}<b_{2}$, then as $U_{K,b_{2}}(t)\leq K\cdot(t_{+})^{s}+U_{0}(t)$, we see that $U_{K,b_{1}}(t)$ and $U_{K,b_{2}}(t)$ satisfies the same equation in $(0,b_{1})$, but the exterior condition of $U_{K,b_{1}}(t)$ is larger than that of $U_{K,b_{2}}(t)$. By Lemma~\ref{lem. maximal principle}, we have $U_{K,b_{1}}(t)\geq U_{K,b_{2}}(t)$.

    From the bound \eqref{eq. bounded slope solution also bounded} and the monotonicity of $U_{K,b}(t)$ in $b$, we see
    \begin{equation*}
        U_{K,b}(t)\to U_{K,\infty}(t)
    \end{equation*}
    for each $t$, with $U_{K,\infty}(t)$ also satisfying \eqref{eq. bounded slope solution also bounded} and extending trivially to $(-\infty,0]$. Using Lemma~\ref{lem. monotone convergence} and arguing similar to \eqref{eq. limit equation computation example}, we get that $U_{K,\infty}(t)$ is a one-dimensional global solution to \eqref{eq. main}. Then, naturally, we choose $U_{K}(t):=U_{K,\infty}(t)$ and the construction is completed. By sending $b\to\infty$ in \eqref{eq. bounded slope solution also bounded}, we have
    \begin{equation}\label{eq. U_K almost best estimate}
        Kt^{s}\leq U_{K}(t)\leq Kt^{s}+U_{0}(t).
    \end{equation}
    Finally, since $U_{0}(t)=O(t^{\frac{\alpha+2s}{1+\gamma}})=o(t^s)$, we have $\lim_{t\to+\infty}\frac{U_{K}(t)}{t^s}=K$.
\end{proof}
\subsection{Non-existence}
In this part, we show that once the condition \eqref{eq. existence requirement} is violated, i.e.:
\begin{equation*}
    \mbox{either }\alpha\leq-2s,\mbox{ or }\alpha\geq(\gamma-1)s,
\end{equation*}
then \eqref{eq. main} does not have a global solution. Moreover, we can construct a homogeneous solution to \eqref{eq. main} once \eqref{eq. existence requirement} holds. These prove Theorem~\ref{thm. nonexist}.
\begin{proof}[Proof of Theorem~\ref{thm. nonexist}]
    When \eqref{eq. existence requirement} holds, then Theorem~\ref{thm. construction} ensures the existence of a global solution. Now we show the necessity of \eqref{eq. existence requirement}. If \eqref{eq. existence requirement} fails, then either $\alpha\leq-2s$, or $\alpha\geq(\gamma-1)s$.
    
    The simpler part is to prove the non-existence of global solutions in the case $\alpha\leq-2s$. To see this, it suffices to show the non-existence of a bounded function $u$ satisfying
    \begin{equation*}
        (-\Delta)^{s}u(x)=\frac{x_{n}^{\alpha}}{u_{r}(x)^{\gamma}}\quad\mbox{in }B_{1}^{+}.
    \end{equation*}
    Suppose otherwise, meaning that we have $u\leq C_{0}$ in $B_{1}^{+}$. Then, we see the right-hand side $\frac{x_{n}^{\alpha}}{u_{1}(x)^{\gamma}}$ is bounded from below by $c x_{n}^{\alpha}$. When $\alpha\leq-2s$, then this violates the continuity of $u$ near $\partial\mathbb{R}^{n}_{+}$.

    The harder part is the case $\alpha\geq(\gamma-1)s$. We let $u_{r}$ be the solution to
    \begin{equation}\label{eq. Br local solution}
        \left\{\begin{aligned}
            &(-\Delta)^{s}u_{r}(x)=\frac{x_{n}^{\alpha}}{u_{r}(x)^{\gamma}}&\mbox{in }B_{r}^{+}\\
            &u(x)=0&\mbox{elsewhere}
        \end{aligned}\right..
    \end{equation}
    It follows from Lemma~\ref{lem. maximal principle} that $u_{r}(x)$ is pointwise increasing in $r$, and they are smaller than any global solution to \eqref{eq. main}, if at least one global solution exists. Therefore, suppose on the contrary that \eqref{eq. main} has a global solution, then by Lemma~\ref{lem. monotone convergence} and using similar computation as in \eqref{eq. limit equation computation example}, we see that $u_{r}(x)$ converges to some $u_{\infty}(x)$, which is a global solution to \eqref{eq. main}.

    We notice that $u_{r}(x)=r^{\frac{\alpha+2s}{\gamma+1}}u_{1}(\frac{x}{r})$, then we see in particular that for every $x\in\partial B_{1}$,
    \begin{equation*}
        u_{\lambda r}(\lambda x)=\lambda^{\frac{\alpha+2s}{\gamma+1}}u_{r}(x).
    \end{equation*}
    By sending $r\to\infty$, we obtain that $u_{\infty}(\lambda x)=\lambda^{\frac{\alpha+2s}{\gamma+1}}u_{\infty}(x)$, i.e., $u_{\infty}(x)$ is homogeneous with degree $\frac{\alpha+2s}{\gamma+1}$.

    Besides, pick an arbitrary $p\in\partial\mathbb{R}^{n}_{+}$, by the domain inclusion
    \begin{equation*}
        B_{r}(x)\leq B_{2r}(x-p)\leq B_{4r}(x),\quad\mbox{for every }r>|p|
    \end{equation*}
    and Lemma~\ref{lem. maximal principle}, we have:
    \begin{equation*}
        u_{r}(x)\leq u_{2r}(x-p)\leq u_{4r}(x),\quad\mbox{for every }r>|p|.
    \end{equation*}
    We then send $r\to\infty$ and obtain $u_{\infty}(x)=u_{\infty}(x-p)$, in other words, $u_{\infty}(x)$ depends only on $x_{n}$.
    
    In conclusion, $u_{\infty}(x)$ is a global solution to \eqref{eq. main} and takes the form $u_{\infty}(x)=K x_{n}^{\frac{\alpha+2s}{\gamma+1}}$. If $\alpha\geq(\gamma-1)s$, we see either $u_{\infty}\notin\mathcal{L}_{2s}(\mathbb{R}^{n})$ (when $\frac{\alpha+2s}{\gamma+1}\geq2s$), or $u_{\infty}$ is not a solution to \eqref{eq. main} for any $K>0$ by Lemma~\ref{lem. homogeneous solution derivative expression} (notice that $K_{\beta}\leq0$ in Lemma~\ref{lem. homogeneous solution derivative expression} for $\beta=\frac{\alpha+2s}{\gamma+1}\geq s$). This results in a contradiction.
\end{proof}

\section{Asymptotic estimates}

In this section, we study the asymptotic behavior of a global solution, and prove Lemma~\ref{lem. u is close to a linear function}.
\begin{proof}[Proof of Lemma~\ref{lem. u is close to a linear function} (1)]
    It suffices to choose $r=h$ in Lemma~\ref{lem. lower bound of a local solution}.
\end{proof}
\begin{proof}[Proof of Lemma~\ref{lem. u is close to a linear function} (2)]
We need to recall the Green function for $(-\Delta)^{s}$ in $\mathbb{R}_{+}^{n}$:
\begin{eqnarray*}
G^{n}_{+}(x,z):=\left\{\begin{array}{ll}\vspace{0.5em}
\kappa(n,s)|x-z|^{2s-n}\int_0^{\frac{4x_nz_n}{|x-z|^2}}\fr{b^{s-1}}{(b+1)^{\frac{n}{2}}}db,\,\ &\mb{if}\ n\neq 2s,\\\vspace{0.8em}
\kappa(1, 1/2)\log\frac{\sqrt{x}+\sqrt{z}}{|\sqrt{x}-\sqrt{z}|}, \  &\mb{if}\ n=2s=1.\end{array}\right.
\end{eqnarray*}
Here, $\kappa(n,s)=\frac{\Gamma(\frac{n}{2})}{2^{2s}\pi^{\frac{n}{2}}\Gamma^2(s)}$. Moreover, for any $x:=(x',x_n)\in\R^n_+, z_n>0, x_n\neq z_n, $ it holds that:
\begin{equation}\label{grne}\int_{\R^{n-1}}G^n_+(x,z)dz'=c_nG^1_+(x_n,z_n),\, c_n=\pi^{\frac{1-n}{2}}\Gamma\left(\frac{n-1}{2}\right),\ n>1.\end{equation}
\begin{itemize}
    \item Step 1. In this step, we show that in the case $n>1$,
\begin{equation}\label{grn1}\int_{\R^n_+}G^n_+(x,z)\frac{z_{n}^{\alpha}}{u(z)^{\gamma}}dz\leq C(x_n)_+^{\frac{\alpha+2s}{1+\gamma}},\,x\in\R^n_+,\ \gamma>1.\end{equation}

It can be derived  from the lower bound estimate (Lemma~\ref{lem. lower bound of a local solution}) and \eqref{grne} that
\begin{equation*}
\int_{\R^{n}_+}G^n_+(x,z)\frac{z_{n}^{\alpha}}{u(z)^{\gamma}}dz\leq C\int_{\R^{n}_+}G^n_+(x,z)z_n^{\frac{\alpha-2s\gamma}{\gamma+1}}dz=c_n\int_0^\infty G^1_+(x_n,z_n)z_n^{\frac{\alpha-2s\gamma}{\gamma+1}}dz_n.
\end{equation*}
Consequently, in order to prove \eqref{grn}, it is sufficient to show that
\begin{equation}\label{grn3}\int_{0}^\infty G^1_+(t,\tau){\tau}^{\frac{\alpha-2s\gamma}{\gamma+1}}d\tau\leq Ct^{\frac{\alpha+2s}{1+\gamma}},\,t>0,\ \gamma>1.\end{equation}

In view of the expression of $G^{1}_{+}(t,\tau)$, we have
\begin{equation}\label{key est}
G^{1}_{+}(t,\tau)\leq\frac{\kappa(1,s)}{s|t-\tau|^{1-2s}}\left(\frac{4t\tau}{|t-\tau|^2}\right)^{s}\leq C\frac{t^{s}{\tau}^{s}}{|t-\tau|},\ \tau>0,\ t>0.
\end{equation}
Now by a variable transformation $\tau=t\tilde{\tau}$, we obtain
\begin{eqnarray}\label{est I2}
\begin{aligned}
\int_{0}^{\infty}G^{1}_{+}(t,\tau){\tau}^{\frac{\alpha-2s\gamma}{\gamma+1}}d\tau=&\kappa(1,s)\int_{0}^\infty |t-\tau|^{2s-1}\int_0^{\frac{4t\tau}{|t-\tau|^2}}\fr{b^{s-1}}{(b+1)^{\frac{1}{2}}}db\cdot
{\tau}^{\frac{\alpha-2s\gamma}{\gamma+1}}d\tau\\
=&\kappa(1,s)t^{2s+\frac{\alpha-2s\gamma}{\gamma+1}}\int_{0}^\infty |1-\tilde{\tau}|^{2s-1}\int_0^{\frac{4\tilde{\tau}}{|1-\tilde{\tau}|^2}}\fr{b^{s-1}}{(b+1)^{\frac{1}{2}}}db\cdot
{\tilde{\tau}}^{\frac{\alpha-2s\gamma}{\gamma+1}}d\tilde{\tau}\\
=&t^{\frac{\alpha+2s}{\gamma+1}}
\left\{\int_{0}^{\frac{1}{2}}+\int_{\frac{1}{2}}^{2}+\int_{2}^{\infty}\right\} G^1_+(1,\tilde{\tau})\tilde{\tau}^{\frac{\alpha-2s\gamma}{\gamma+1}}d\tilde{\tau}=:t^{\frac{\alpha+2s}{\gamma+1}}(I_{1}+I_{2}+I_{3}).\end{aligned}
\end{eqnarray}

Using \eqref{key est}, we can estimate the term $I_1$ in \eqref{est I2} as follows:
\begin{equation}\label{est I21}
I_{1}=\int_{0}^{\frac{1}{2}}G^1_+(1,\tilde{\tau})\tilde{\tau}^{\frac{\alpha-2s\gamma}{\gamma+1}}d\tilde{\tau}\leq C\int_{0}^{\frac{1}{2}}\frac{\tilde{\tau}^s}{|1-\tilde{\tau}|}{\tilde{\tau}}^{\frac{\alpha-2s\gamma}{\gamma+1}}d\tilde{\tau}\leq C\int_{0}^{\frac{1}{2}}\tilde{\tau}^{s+\frac{\alpha-2s\gamma}{\gamma+1}}d\tilde{\tau}\leq C,
\end{equation}
here we  have used the fact that $s+\frac{\alpha-2s\gamma}{\gamma+1}+1>0$.

For $\frac{1}{2}<\tau<2$, since
\begin{equation*}
\int_0^{\frac{4\tilde{\tau}}{|1-\tilde{\tau}|^2}}\fr{b^{s-1}}{(b+1)^{\frac{1}{2}}}db\leq\int_0^1b^{s-1}db
+\int_1^{\frac{4\tilde{\tau}}{(1-\tilde{\tau})^2}}b^{s-1-\frac{1}{2}}db\leq C\left(1+|1-\tilde{\tau}|^{1-2s}\right),
\end{equation*}
the term $I_2$ in \eqref{est I2} can be estimated as follows:
\begin{eqnarray}\label{est I22}
\begin{aligned}
I_{2}
=&
\int_{\frac{1}{2}}^{2}G^1_+(1,\tilde{\tau})\tilde{\tau}^{\frac{\alpha-2s\gamma}{\gamma+1}}d\tilde{\tau}\leq C\int_{\frac{1}{2}}^{{2}} |1-\tilde{\tau}|^{2s-1}\left(1+|1-\tilde{\tau}|^{1-2s}\right)\tilde{\tau}^{\frac{\alpha-2s\gamma}{\gamma+1}}d\tilde{\tau}\\
\leq& C\int_{\frac{1}{2}}^{{2}}\left(1+|1-\tilde{\tau}|^{2s-1}\right)\tilde{\tau}^{\frac{\alpha-2s\gamma}{\gamma+1}}d\tau\leq C.
\end{aligned}
\end{eqnarray}

For the term $I_3$ in \eqref{est I2}, the assumption on $\gamma$ implies $s-1+\frac{\alpha-2s\gamma}{\gamma+1}<-1$, which, together with the estimate \eqref{key est}, gives rise to
\begin{equation}\label{est I23}
I_{3}=\int_{2}^{{\infty}}G^1_+(1,\tilde{\tau})\tilde{\tau}^{\frac{\alpha-2s\gamma}{\gamma+1}}d\tilde{\tau}\leq C\int_{2}^{{\infty}}\frac{\tilde{\tau}^s}{\tilde{\tau}-1}\tilde{\tau}^{\frac{\alpha-2s\gamma}{\gamma+1}}d\tilde{\tau}\leq C\int_{2}^{{\infty}}\tilde{\tau}^{s-1+\frac{\alpha-2s\gamma}{\gamma+1}}d\tilde{\tau}\leq C.
\end{equation}
Substituting  \eqref{est I21}, \eqref{est I22} and \eqref{est I23}  into \eqref{est I2}, we deduce that
\[\int_{0}^\infty G^1_+(t,\tau){\tau}^{\frac{\alpha-2s\gamma}{\gamma+1}}d\tau\leq Ct^{\frac{\alpha+2s}{1+\gamma}},\ \forall t>0,\]
Hence, we have verified \eqref{grn3} in the case $n>1$.

\item Step 2. This step is devoted to the proof of
\begin{equation}\label{ubd1}
|u(x)-K x_{n}^{s}|\leq C x_{n}^{\frac{\alpha+2s}{\gamma+1}},\quad x\in\mathbb{R}_{+}^{n},\quad n>1.
\end{equation}
We set
\begin{equation*}
    v(x):=\int_{\R^n_+}G^n_+(x,z)\frac{z_{n}^{\alpha}}{u(z)^{\gamma}}dz.
\end{equation*}
It then follows that $u(x)-v(x)$ is fractional-harmonic in $\mathbb{R}^{n}_{+}$ and vanishes in $\mathbb{R}^{n}_{-}$. Moreover, by \eqref{grn1}, we know that $u(x)-v(x)\geq-C x_n^{\frac{\alpha+2s}{\gamma+1}}$ in $\mathbb{R}^{n}_{+}$. We claim that
\begin{equation}\label{eq. claim in integral and differential equivalence}
    u(x)-v(x)\geq-x_n^{s},\quad\mbox{for all }x\in\mathbb{R}^{n}_{+}.
\end{equation}
If \eqref{eq. claim in integral and differential equivalence} is true, then by the Liouville theorem of half-plane $s$-harmonic functions, we have $u(x)-v(x)\equiv K x_{n}^{s}$ for some constant $K\geq0$. Then \eqref{ubd1} directly follows.

To prove \eqref{eq. claim in integral and differential equivalence}, we set
\begin{equation*}
    w(x)=u(x)-v(x)+(x_n)_{+}^{s},
\end{equation*}
and our goal is to show $w\geq0$ everywhere.

In fact, it follows that $w(x)$ is fractional-harmonic in $\mathbb{R}^{n}_{+}$ and $w(x)\geq-M$ for some $M$. Moreover, $\{x:w(x)\leq0\}$ is contained in a stripe $\{x:\ 0<x_{n}<H\}$ for some $H$.

We consider a class of barrier functions for $x\in\{x:\ -2H<x_{n}<2H\}$:
\begin{equation*}
    \mathcal{B}(x,y',h):=-h\cdot(1-\frac{x_{n}^{2}}{16H^{2}})_{+}^{s}\cdot\Big(1-\frac{\varepsilon}{1+\varepsilon|x'-y'|^{2}}\Big).
\end{equation*}
Here, $h>0$, $y'\in\mathbb{R}^{n-1}$, and $\varepsilon$ is chosen so small such that
\begin{equation*}
    (-\Delta)^{s}_{x}\mathcal{B}(x,y',h)<0\quad\mbox{for all }x\in\{x:\ -2H<x_{n}<2H\}.
\end{equation*}

Suppose that $\displaystyle\inf_{\{x:\ 0<x_{n}<H\}}w=-M<0$, then there exists some $y\in\{x:\ 0<x_{n}<H\}$, such that $u(y)\leq-(1-\frac{\varepsilon}{2})M$. We translate the graph of $\mathcal{B}(x,y',h)$ to $\mathcal{B}(x-y_{n},y',h)$ and start from $h=100M$. Since $u(y)\leq-(1-\frac{\varepsilon}{2})M$, we know that there exists some $h_{0}$, so that as $h$ decreases, the graph of $\mathcal{B}(x-y_{n},y',h)$ contacts the graph of $w(x)$ for the first time at $h=h_{0}$, and at least one contacting point (say, $z^{*}$) is in the interior of the stripe $\{x:\ 0<x_{n}<H\}$. As
\begin{equation*}
    (-\Delta)^{s}_{x}\mathcal{B}(z^{*}-y_{n},y',h_{0})<0,
\end{equation*}
we have $(-\Delta)^{s}w(z^{*})<0$, which is a contradiction. This proves \eqref{eq. claim in integral and differential equivalence}.
\item Step 3. We have already shown Lemma~\ref{lem. u is close to a linear function} when $n>1$, so it remains to consider the case $n=1$. This time, for the solution $u(x):\mathbb{R}_{+}\to\mathbb{R}$, we construct $\tilde{u}(y,x):\mathbb{R}^{2}_{+}\to\mathbb{R}$ as
\begin{equation*}
    \tilde{u}(y,x):=u(x),\quad\mbox{for }y\in\mathbb{R},\ x\in\mathbb{R}_{+}.
\end{equation*}
It then follows that $\tilde{u}$ still satisfies the equation
\begin{equation*}
    (-\Delta)^s\tilde{u}(y,x)=\frac{x^{\alpha}}{\tilde{u}(y,x)^{\gamma}}.
\end{equation*}
We then apply Step 1-2 to $\tilde{u}(y,x)$, then the estimate for $u(x)$ follows immediately.
\end{itemize}
\end{proof}
\section{Classification of global solutions}
Now let's prove the remaining parts of Theorem~\ref{thm. classification}.
\subsection{The method of moving planes}\label{subsection: MMP}
In this subsection, when $\gamma$ satisfies \eqref{eq. existence requirement}. we show that the solution $u(x)$ is one-dimensional, meaning that it depends on $x_{n}$ only, thus proving Theorem~\ref{thm. classification} (1). Since $u$ does not tend to zero at infinity, we have to first convert $u$ into its Kelvin transform $v$, which tends to zero at infinity. Then we can apply the method of moving planes to obtain the one-dimensional symmetry of $u$.

\begin{proof}[Proof of Theorem~\ref{thm. classification} (1)]
It suffices to assume $n\geq2$. The key is to show that $u(x)$ is radial symmetric in the $x'$-direction with respect to any $x^{0}\in\partial\mathbb{R}^{n}_{+}$. Without loss of generality, we let $x^{0}=0$ and set
\begin{equation*}
    v(x)=\frac{1}{|x|^{n-2s}}u\left(\frac{x}{|x|^2}\right).
\end{equation*}
As we have $\|u\|_{L^{\infty}(B_{1}^{+})}<\infty$, it is readily apparent that
\begin{equation*}
\lim_{|x|\to\infty}v(x)=0
\end{equation*}
and it can be easily verified that for $x\in \mathbb{R}^{n}_{+}$,
\begin{equation*}
(-\Delta)^{s}v(x)=\frac{1}{|x|^\beta}\frac{x_{n}^{\alpha}}{v(x)^\gamma},\quad\mbox{with }\beta=n+2s+\gamma(n-2s)+2\alpha>0.
\end{equation*}

For any given $\lambda<0$, let $\tilde{T}_\lambda=\{x\in\mathbb{R}^{n}\mid x_1=\lambda \}$ be the moving planes and let
\begin{equation*}
\tilde{\Sigma}_{\lambda}=\{x\in\mathbb{R}^{n}\mid x_{1}<\lambda\},\quad\Sigma_{\lambda}=\tilde{\Sigma}_{\lambda}\cap\mathbb{R}^{n}_{+}.
\end{equation*}
Define $x^{\lambda}=(2\lambda-x_{1}, x_{2},\cdots,x_{n})$ as the reflection of $x$ about $\tilde T_\lambda$. Set $v_\lambda(x)=v(x^\lambda)$ and consider
\begin{equation*}
V_\lambda(x)=v_\lambda(x)-v(x).
\end{equation*}
From \eqref{eq. main}, we infer that
\begin{equation}\label{eq1-2}
(-\Delta)^{s}V_\lambda(x)=\frac{x_{n}^{\alpha}}{|x^{\lambda}|^\beta v(x^\lambda)^\gamma}-\frac{x_{n}^{\alpha}}{|x|^\beta v(x)^\gamma}.
\end{equation}
We also notice that $V_{\lambda}=0$ on $\tilde{T}_{\lambda}$, as $x=x^{\lambda}$ on that surface. In fact, we have $V_{\lambda}(x)=-V_{\lambda}(x^\lambda)$ for all $x$. Moreover, $v$ vanishes at the boundary $\partial\mathbb{R}^{n}$ continuously, except possibly at the origin. In conclusion, we obtain that the boundary value of $V_{\lambda}(x)$ in $\Sigma_{\lambda}$ satisfies:
\begin{equation}\label{eq. V lambda positive on the boundary}
    \liminf_{\substack{x\in\Sigma_{\lambda}\\x\to\partial\Sigma_{\lambda}\mbox{ or }|x|\to\infty}}V_{\lambda}(x)\geq0.
\end{equation}

To derive that $u(x)$ depends on $x_n$ only, we compare the value of $u$ at the point $x$ with the point $x^\lambda$. More precisely, we aim at showing that
\begin{equation}\label{eq1-3}
V_\lambda(x)\geq0,\quad\mbox{for all }x\in\Sigma_{\lambda}\mbox{ with }\lambda<0.
\end{equation}

In fact, suppose that \eqref{eq1-3} is false, then from \eqref{eq. V lambda positive on the boundary}, there exists a point $\bar{x}\in\Sigma_{\lambda}$ such that
\begin{equation*}
V_{\lambda}(\bar{x})=\min_{\Sigma_{\lambda}}V_{\lambda}<0.
\end{equation*}
From \eqref{eq1-2}, we see that at $\bar{x}$, if $v(\bar{x})>v_{\lambda}(\bar{x})=v(\bar{x}^\lambda)$, then
\begin{equation*}
(-\Delta)^s V_\lambda (\bar{x})=\frac{x_{n}^{\alpha}}{|\bar{x}^{\lambda}|^\beta v(\bar{x}^\lambda)^\gamma}-\frac{x_{n}^{\alpha}}{|\bar{x}|^\beta v(\bar{x})^{\gamma}}>0.
\end{equation*}
On the other hand, we claim that the ``derivative test of order $2s$" will imply
\begin{equation}\label{eq. second derivative test}
    (-\Delta)^{s}V_{\lambda}(\bar{x})\leq 0,
\end{equation}
and thus reaching a contradiction. To prove \eqref{eq. second derivative test}, we have
\begin{align*}
    &(-\Delta)^s V_{\lambda}(\bar x)=C_{n,s}PV\int_{\mathbb{R}^n}\frac{V_{\lambda}(\bar x)-V_{\lambda}(y)}{|\bar{x}-y|^{n+2s}}dy\\
    =&C_{n,s}PV\int_{\tilde\Sigma_{\lambda}}\frac{V_{\lambda}(\bar{x})-V_{\lambda}(y)}{|\bar{x}-y|^{n+2s}}dy+C_{n,s}\int_{\mathbb{R}^{n}\setminus\tilde\Sigma_{\lambda}}\frac{V_{\lambda}(\bar x)-V_{\lambda}(y)}{|\bar{x}-y|^{n+2s}}dy\\
    =&C_{n,s}PV\int_{\tilde\Sigma_{\lambda}}\frac{V_{\lambda}(\bar{x})-V_{\lambda}(y)}{|\bar{x}-y|^{n+2s}}dy+C_{n,s}\int_{\tilde\Sigma_{\lambda}}\frac{V_{\lambda}(\bar{x})+V_{\lambda}(y)}{|\bar{x}-y^{\lambda}|^{n+2s}}d y\\
    =&C_{n,s}\int_{\tilde\Sigma_{\lambda}}\frac{2V_{\lambda}(\bar{x})}{|\bar{x}-y^{\lambda}|^{n+2s}}dy+C_{n,s}PV\int_{\tilde\Sigma_{\lambda}}\Big(V_{\lambda}(\bar{x})-V_{\lambda}(y)\Big)\Big(\frac{1}{|\bar{x}-y|^{n+2s}}-\frac{1}{|\bar{x}-y^{\lambda}|^{n+2s}}\Big)dy\\
    \leq&C_{n,s}PV\int_{\Sigma_{\lambda}}\Big(V_{\lambda}(\bar{x})-V_{\lambda}(y)\Big)\Big(\frac{1}{|\bar{x}-y|^{n+2s}}-\frac{1}{|\bar{x}-y^{\lambda}|^{n+2s}}\Big)dy.
\end{align*}
Notice that in $\Sigma_{\lambda}$,
\begin{equation*}
    V_{\lambda}(\bar{x})-V_{\lambda}(y)\leq0,\mbox{ and }\Big(\frac{1}{|\bar{x}-y|^{n+2s}}-\frac{1}{|\bar{x}-y^{\lambda}|^{n+2s}}\Big)\geq0,
\end{equation*}
so we have shown \eqref{eq. second derivative test}. Therefore, \eqref{eq1-3} is indeed valid, which indicates that $v(x)$ is monotone increasing in the $x_{1}$-direction provided that $x_{1}<0$.

If we move the plane from the right side of $0$, then by a similar argument, we deduce that $v(x)$ is monotone increasing along the $x_{1}$-direction whenever $x_{1}>0$. As a consequence, $v(x)$ is symmetric about the plane $\tilde{T}_{0}$. Since the $x_{1}$-direction can be chosen arbitrarily, we have shown that $v(x)$ is radially symmetric about $0$ in the $x'$-direction. Moreover, by the translation invariance of \eqref{eq. main}, we conclude that $u(x)$ depends on $x_{n}$ only, i.e., $u(x)=u(x_{n})$. This finishes the proof of Theorem~\ref{thm. classification} (1).
\end{proof}

\subsection{Analysis of one-dimensional solutions}
Based on the established one-dimensional symmetry result, now \eqref{eq. main} can be transformed into a singular one-dimensional problem given by
\begin{equation}\label{eq. main1}
    \begin{cases}
    (-\Delta)_1^s u(t)=\frac{t^{\alpha}}{u(t)^\gamma},&t\in\mathbb{R}_{+},\\
    u(t)=0,& t\in\mathbb{R}\backslash\mathbb{R}_+.
    \end{cases}
\end{equation}
Now we study the classification of solutions to \eqref{eq. main1}. We first need two additional lemmas to further describe solutions $U_{K}(t)$'s mentioned in Theorem~\ref{thm. construction}.
\begin{lemma}[$U_{0}$ is the minimal solution]\label{lem. homogeneous solution is minimal}
If $u(t)$ is a global solution of \eqref{eq. main1}, then $u(t)$ is everywhere no less than $U_{0}(t)$ mentioned in Theorem~\ref{thm. construction} (1).
\end{lemma}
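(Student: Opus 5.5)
We compare $u$ with the truncated solutions from the non-existence proof. For $r>0$, let $u_r$ denote the solution of \eqref{eq. Br local solution} in dimension one, i.e. the solution of $(-\Delta)^s_1 u_r=t^{\alpha}u_r^{-\gamma}$ in $(0,r)$ with $u_r=0$ outside $(0,r)$. It exists and is unique by Lemma~\ref{lem. well posedness}.

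\textbf{Step 1: $u_r\le u$.} Since $u\ge 0=u_r$ outside $(0,r)$ and both functions solve the same equation in $(0,r)$, Lemma~\ref{lem. maximal principle} gives $u_r\le u$ on $(0,r)$. The same lemma shows that $u_r$ is pointwise nondecreasing in $r$: for $r<r'$ we have $u_{r'}\ge 0=u_r$ outside $(0,r)$.

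\textbf{Step 2: the limit $u_\infty$ is a global solution.} By Step 1, the family $u_r$ is monotone and bounded above by $u\in\mathcal L_{2s}$. Hence $u_r\to u_\infty\le u$ pointwise, with $\|u_r\|_{\mathcal L_{2s}}\le\|u\|_{\mathcal L_{2s}}$. Lemma~\ref{lem. lower bound of a local solution}, applied with $r$ large, bounds the right-hand side $t^\alpha u_r^{-\gamma}$ locally uniformly on compact subsets of $(0,\infty)$. This yields uniform interior $C^{2s+2\sigma}$ estimates, so Lemma~\ref{lem. monotone convergence} applies and $u_\infty$ solves \eqref{eq. main1} in $(0,\infty)$. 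Continuity of $u_\infty$ at $t=0$ follows from $0\le u_\infty\le u$ and $u(0)=0$.

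\textbf{Step 3: identify $u_\infty$ with $U_0$.} Scaling uniqueness gives $u_r(t)=r^{\beta}u_1(t/r)$ with $\beta=\frac{\alpha+2s}{1+\gamma}$, and letting $r\to\infty$ shows that $u_\infty$ is $\beta$-homogeneous. Hence $u_\infty=c\,t_+^{\beta}$ for some $c>0$; positivity of $c$ comes from Lemma~\ref{lem. lower bound of a local solution}. Plugging into Lemma~\ref{lem. homogeneous solution derivative expression} gives $cK_\beta=c^{-\gamma}$, so $c=K_\beta^{-1/(1+\gamma)}$. Here $K_\beta>0$ because $0<\beta<s$ under \eqref{eq. existence requirement}. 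Therefore $u_\infty=U_0$, and consequently $U_0=u_\infty\le u$.

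The main obstacle is Step 2. Lemma~\ref{lem. monotone convergence} needs $C^{2s+\sigma}_{loc}$ convergence, so the interior regularity must hold uniformly in $r$ near every fixed $t>0$. Uniformity should follow from the $r$-independent lower bound of Lemma~\ref{lem. lower bound of a local solution} combined with the uniform $\mathcal L_{2s}$ bound supplied by $u$. If needed, one can alternatively bypass identification by uniqueness: $u_\infty\le u$ is a global solution that is homogeneous, and homogeneity forces the constant $c$ via Lemma~\ref{lem. homogeneous solution derivative expression}.
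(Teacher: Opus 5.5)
Your proposal is correct and follows essentially the same route as the paper: truncated Dirichlet solutions on $(0,r)$ lying below every global solution and increasing in $r$ (by the maximum principle), passage to the limit via the monotone convergence lemma, and identification of the $\beta$-homogeneous limit with $U_0$ through the scaling relation $u_r(t)=r^{\beta}u_1(t/r)$ and the formula for $(-\Delta)^s t_+^{\beta}$. The only differences are cosmetic: you dominate $u_r$ by $u$ itself rather than by $U_0$ to get the uniform $\mathcal{L}_{2s}$ bound, and you spell out the step $cK_\beta=c^{-\gamma}$ (using $K_\beta>0$ for $0<\beta<s$), which the paper leaves implicit.
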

\begin{proof}
    We only show the minimality of $U_{0}$. We let $u_{b}(t)$ be the solution to
    \begin{equation}\label{eq. 1d in (0,b)}
        \left\{\begin{aligned}
            &(-\Delta)^{s}u_{b}(t)=\frac{t^{\alpha}}{u_{b}(t)^{\gamma}}&\mbox{in }(0,b)\\
            &u_{b}(t)=0&\mbox{elsewhere}
        \end{aligned}\right.,
    \end{equation}
    existence of which is guaranteed by Lemma~\ref{lem. well posedness}. We observe the following facts:
    \begin{itemize}
        \item $u_{b}(t)$ is increasing in $b$ by Lemma~\ref{lem. maximal principle};
        \item $u_{b}(t)\leq u(t)$ for every global solution $u(t)$, e.g., $U_{0}(t)$, because it has a smaller exterior condition;
        \item $u_{tb}(t)=t^{\frac{\alpha+2s}{\gamma+1}}u_{b}(1)$ by direct computation.
    \end{itemize}
    Then, we send $b\to\infty$, we see $u_{b}(t)\to u_{\infty}(t)\leq U_{0}(t)$ in the pointwise sense, and in fact, such a function $u_{\infty}(t)$ must be smaller than any global solution $u(t)$ to \eqref{eq. 1d in (0,b)} by Lemma~\ref{lem. maximal principle}.
    
    Now it suffices to show $u_{\infty}(t)=U_{0}$ (then, $U_{0}$ is the smallest global solution). It follows from Lemma~\ref{lem. monotone convergence} that $u_{\infty}(t)$ is a global solution to \eqref{eq. main1}, and we omit the details since it is similar to the computation in \eqref{eq. limit equation computation example}. Besides, by sending $b\to\infty$ in the relation $u_{tb}(t)=t^{\frac{\alpha+2s}{\gamma+1}}u_{b}(1)$, we have
    \begin{equation*}
        u_{\infty}(t)=t^{\frac{\alpha+2s}{\gamma+1}}u_{\infty}(1).
    \end{equation*}
    Consequently, $u_{\infty}(t)$ is also $\frac{\alpha+2s}{\gamma+1}$-homogeneous. By Lemma~\ref{lem. homogeneous solution derivative expression}, we have $u_{\infty}(t)=U_{0}$.
\end{proof}
\begin{lemma}[Continuity in $K$]\label{lem. continuity in slope}
    For every $K\geq0$, $U_{K}(x)$'s constructed in Theorem~\ref{thm. construction} are pointwise increasing and continuous in $K$. Precisely speaking, we have:
    \begin{equation*}
        \lim_{k\to K}U_{k}(t)=U_{K}(t),\quad\mbox{for all }t>0.
    \end{equation*}
\end{lemma}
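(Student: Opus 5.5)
Monotonicity in $K$ comes directly from the construction, and continuity reduces to showing that a monotone limit of the $U_k$ is again a solution with the right slope.

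\textbf{Monotonicity.} For fixed $b$ and $k<K$, both $U_{k,b}$ and $U_{K,b}$ solve \eqref{eq. slope solution in bounded interval} in $(0,b)$. Their exterior data satisfy $U_{0}+k t_+^s\leq U_{0}+K t_+^s$. Lemma~\ref{lem. maximal principle} therefore gives $U_{k,b}\leq U_{K,b}$, and letting $b\to\infty$ yields $U_k\leq U_K$. For $K=0$ we use instead that $U_0$ is the minimal solution, by Lemma~\ref{lem. homogeneous solution is minimal}. So $k\mapsto U_k(t)$ is nondecreasing, and the one-sided limits
\begin{equation*}
U_{K^\pm}(t):=\lim_{k\to K^\pm}U_k(t)
\end{equation*}
exist pointwise. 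It remains to show $U_{K^-}=U_{K^+}=U_K$.

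\textbf{The limits are solutions with slope $K$.} By \eqref{eq. U_K almost best estimate} we have $kt^s\leq U_k\leq kt^s+U_0$. Hence, for $k$ near $K$, the family is uniformly bounded in $\mathcal{L}_{2s}$ and locally bounded. It is also bounded below by $U_0$, which controls the right-hand side $t^\alpha U_k^{-\gamma}$ on compact subsets of $(0,\infty)$. Interior regularity then gives uniform $C^{2s+2\sigma}_{loc}$ bounds, exactly as in the proof of Lemma~\ref{lem. well posedness}. Applying Lemma~\ref{lem. monotone convergence} to monotone sequences $k_j\uparrow K$ and $k_j\downarrow K$ shows that $U_{K^\pm}$ are global solutions of \eqref{eq. main1}. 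Passing to the limit in the two-sided bound gives
\begin{equation*}
Kt^s\leq U_{K^\pm}(t)\leq Kt^s+U_0(t),
\end{equation*}
and continuity at $t=0$ with zero exterior data is inherited from this bound.

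\textbf{Identification with $U_K$.} This is the main obstacle: we need uniqueness among solutions with the same slope $K$. The plan is a comparison argument. Let $u$ be any global solution with $Kt^s\leq u\leq Kt^s+U_0$. Since $u\leq Kt^s+U_0$ on all of $\mathbb{R}$, the maximal principle comparison against $U_{K,b}$ on $(0,b)$ gives $u\leq U_{K,b}$, whose exterior datum is $U_0+Kt_+^s$. Letting $b\to\infty$ yields $u\leq U_K$. In particular $U_{K^\pm}\leq U_K$, and monotonicity already gives $U_{K^+}\geq U_K$, so $U_{K^+}=U_K$.

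For the left limit I would use the scaling relation. Each $U_k$ for $k>0$ is obtained from $U_K$ by the explicit rescaling in Theorem~\ref{thm. further property}(2), which follows from uniqueness in Lemma~\ref{lem. well posedness} and the scaling invariance of \eqref{eq. slope solution in bounded interval}. With $\lambda=k/K$ this reads
\begin{equation*}
U_k(t)=\lambda^{-a}U_K(\lambda^{c}t),\quad a=\frac{\alpha+2s}{s(\gamma-1)-\alpha},\quad c=\frac{1+\gamma}{s(\gamma-1)-\alpha}.
\end{equation*}
Since $U_K$ is continuous, $U_k(t)\to U_K(t)$ as $\lambda\to1$ from either side. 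This gives continuity at every $K>0$ directly and bypasses the uniqueness issue for the left limit.

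At $K=0$ only $k\downarrow0$ matters. The limit $U_{0^+}$ is a solution with $U_0\leq U_{0^+}\leq U_0$, where the lower bound is Lemma~\ref{lem. homogeneous solution is minimal} and the upper bound is the limiting estimate $U_{0^+}\leq 0\cdot t^s+U_0$. Hence $U_{0^+}=U_0$, which completes the continuity statement for all $K\geq0$.
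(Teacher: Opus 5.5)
Your proof is correct, but it takes a different route from the paper's.

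\textbf{The paper's argument.} The paper works directly with the difference $v_b=U_{K_2,b}-U_{K_1,b}$ for $K_1<K_2$. This function is nonnegative and satisfies $(-\Delta)^s v_b=t^\alpha U_{K_2,b}^{-\gamma}-t^\alpha U_{K_1,b}^{-\gamma}\le 0$ in $(0,b)$. Its exterior datum is $(K_2-K_1)t_+^s$, which is $s$-harmonic. Comparison gives $0\le v_b\le (K_2-K_1)t^s$, and letting $b\to\infty$ yields the explicit Lipschitz bound $0\le U_{K_2}(t)-U_{K_1}(t)\le (K_2-K_1)t^s$. This uses only that the nonlinearity is monotone in $u$. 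It needs no one-sided limits, no regularity estimates, no Lemma~\ref{lem. monotone convergence}, and no scaling.

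\textbf{Your argument.} You exploit scale invariance instead, and that is where the real content of your proof lies. Set $\beta=\frac{\alpha+2s}{1+\gamma}$. Since $U_0$ is exactly $\beta$-homogeneous, the function $\rho^{-\beta}U_{K,b}(\rho t)$ solves the problem on $(0,b/\rho)$ with datum $U_0+\rho^{s-\beta}Kt_+^s$. Uniqueness then gives $\rho^{-\beta}U_{K,b}(\rho\,\cdot)=U_{\lambda K,b/\rho}$ with $\lambda=\rho^{s-\beta}$, and sending $b\to\infty$ gives the scaling relation. Continuity at $K>0$ then reduces to continuity of $U_K$ in $t$.

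\textbf{Avoiding circularity.} You should write this derivation out rather than cite Theorem~\ref{thm. further property}(2). The paper proves that statement through Theorem~\ref{thm. classification}(2), whose proof uses the present lemma, so a bare citation would be circular. Your parenthetical remark points to the right fix, but it needs to be carried out.

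\textbf{Redundant steps.} Once the scaling is available, your middle steps are correct but superfluous. These are the claim that $U_{K^\pm}$ are global solutions and the maximality of $U_K$ among solutions below $Kt^s+U_0$. They are not needed for $K>0$. At $K=0$, the sandwich $U_0\le U_k\le U_0+kt^s$ gives the limit directly, without knowing that $U_{0^+}$ solves anything.

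\textbf{What each route gives.} Your approach yields the scaling relation independently of the classification, plus a maximality characterization of $U_K$. The paper's approach gives a quantitative, uniform modulus of continuity that does not rely on the homogeneous structure of the equation.
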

\begin{proof}
    We let $K_{1}<K_{2}$ and consider
    \begin{equation*}
        v_{b}(t)=U_{K_{2},b}(t)-U_{K_{1},b}(t).
    \end{equation*}
    As the exterior condition of $U_{K_{2},b}(t)$ is larger than $U_{K_{1},b}(t)$, we have $U_{K_{2},b}(t)\geq U_{K_{1},b}(t)$ by Lemma~\ref{lem. maximal principle}. In other words, $v_{b}(t)\geq0$. By sending $b\to\infty$, we have $U_{K_{2}}(t)\geq U_{K_{1}}(t)$, proving the monotonicity in $K$. As for the continuity, we notice that the exterior condition for $v_{b}(t)$ is
    \begin{equation*}
        v_{b}(t)=(U_{0}(t)+K_{2}t^{s})_{+}-(U_{0}(t)+K_{1}t^{s})_{+}=(K_{2}-K_{1})(t^{s})_{+}.
    \end{equation*}
    The equation satisfied by $v_{b}(t)$ in $(0,b)$ is:
    \begin{equation*}
        (-\Delta)^{s}v_{b}=(-\Delta)^{s}U_{K_{2},b}(t)-(-\Delta)^{s}U_{K_{1},b}(t)=\frac{t^{\alpha}}{U_{K_{2},b}(t)^{\gamma}}-\frac{t^{\alpha}}{U_{K_{1},b}(t)^{\gamma}}\leq0.
    \end{equation*}
    Therefore, we have $0\leq v_{b}(t)\leq(K_{2}-K_{1})t^{s}$ in $(0,b)$. In conclusion, for any fixed $t$, as long as $K_{1}$ and $K_{2}$ are sufficiently close, $U_{K_{1},b}(t)$ and $U_{K_{2},b}(t)$ are also close to each other. Passing to the limit
    \begin{equation*}
        \lim_{|K_{2}-K_{1}|\to0}|U_{K_{2}}(t)-U_{K_{1}}(t)|\to0
    \end{equation*}
    in the pointwise sense, verifying the pointwise continuity in $K$.
\end{proof}
We are now able to prove Theorem~\ref{thm. classification} (2).
\begin{proof}[Proof of Theorem~\ref{thm. classification} (2)]
    It follows from Theorem~\ref{thm. classification} (1) that $u(x)$ depends only on $x_{n}$ when $n\geq2$, so we only need to consider the case $n=1$. Recall that from Lemma~\ref{lem. u is close to a linear function}, we can find some constant $K\geq0$ (depending on $u$), such that for every $t\in\mathbb{R}_{+}$,
    \begin{equation*}
        |u(t)-K t^{s}|\leq C_{4}t^{\frac{\alpha+2s}{\gamma+1}}.
    \end{equation*}
    From the construction of $U_{K}(t)$ in Theorem~\ref{thm. construction} (2), it should be expected that $u(t)=U_{K}(t)$. To see this, we first consider the case where $K>0$, and it suffices to show
    \begin{equation}\label{eq. assert what u is}
        U_{K-\varepsilon}(t)\leq u(t)\leq U_{K+\varepsilon}(t)\quad\mbox{for every }\varepsilon>0.
    \end{equation}
    Then, $u(t)=U_{K}(t)$ follows from Lemma~\ref{lem. continuity in slope}. To prove \eqref{eq. assert what u is}, we consider two cases:
    \begin{itemize}
        \item Case 1: When $K>0$, from $|u(t)-K t^{s}|\leq C_{4}t^{\frac{\alpha+2s}{\gamma+1}}$ and the asymptotic behavior of $U_{K}(t)$, we see
    \begin{equation*}
        u(t)\leq U_{K+\varepsilon}(t)\quad\mbox{in }(0,b)^{c},
    \end{equation*}
    for every $b\geq b_{0}(K,\varepsilon)$. Then by Lemma~\ref{lem. maximal principle}, we see
    \begin{equation*}
        u(t)\leq U_{K+\varepsilon}(t)\quad\mbox{in }(0,b)
    \end{equation*}
    as well. We then send $b\to\infty$, and prove the second inequality in \eqref{eq. assert what u is}. The first inequality in \eqref{eq. assert what u is} is argued similarly and we omit the proof.
    \item Case 2: When $K=0$, the second inequality of \eqref{eq. assert what u is} still holds true, meaning that at least we have $u(t)\leq U_{0}(t)$. Then from the minimality of $U_{0}$ (see Lemma~\ref{lem. homogeneous solution is minimal}), we have $u(t)=U_{0}(t)$.
    \end{itemize}
\end{proof}
\section{Further properties of global solutions}
\begin{proof}[Proof of Theorem~\ref{thm. further property} (1)]
    It suffices to combine \eqref{eq. U_K almost best estimate} with Lemma~\ref{lem. homogeneous solution is minimal}.
\end{proof}
\begin{proof}[Proof of Theorem~\ref{thm. further property} (2)]
    We notice by direct computation that
\begin{equation*}
    \lambda^{-\frac{\alpha+2s}{s(\gamma-1)-\alpha}}U_{K}(\lambda^{\frac{1+\gamma}{s(\gamma-1)-\alpha}}t)
\end{equation*}
is also a solution to \eqref{eq. main1}. Next, since we have
\begin{equation*}
    \lim_{t\to+\infty}\frac{\lambda^{-\frac{\alpha+2s}{s(\gamma-1)-\alpha}}U_{K}(\lambda^{\frac{1+\gamma}{s(\gamma-1)-\alpha}}t)}{t^s}=\lambda K,
\end{equation*}
then Theorem~\ref{thm. classification} (2) implies that $\lambda^{-\frac{\alpha+2s}{s(\gamma-1)-\alpha}}U_{K}(\lambda^{\frac{1+\gamma}{s(\gamma-1)-\alpha}}t)$ is nothing but $U_{\lambda K}$.
\end{proof}
\begin{proof}[Proof of Theorem~\ref{thm. further property} (3)]
    The monotonicity for $U_{0}(x_{n})$ is obvious, so we focus on showing the monotonicity of $U_{K}(x_{n})$ for $K>0$. The trick is to utilize the mutually scaling relation given in Theorem~\ref{thm. further property} (2).
    
    If \eqref{eq. existence requirement} holds, then we see
    \begin{equation*}
        -\frac{\alpha+2s}{s(\gamma-1)-\alpha}<0,\quad\frac{1+\gamma}{s(\gamma-1)-\alpha}>0.
    \end{equation*}
    For every $t_{1},t_{2}$ satisfying $0<t_{1}<t_{2}$, we denote
    \begin{equation*}
        \lambda=(\frac{t_{2}}{t_{1}})^{\frac{s(\gamma-1)-\alpha}{1+\gamma}}>1.
    \end{equation*}
    Then by Theorem~\ref{thm. further property} (2), we have
    \begin{equation*}
        U_{\lambda K}(t_{1})=\lambda^{-\frac{\alpha+2s}{s(\gamma-1)-\alpha}}U_{K}(\lambda^{\frac{1+\gamma}{s(\gamma-1)-\alpha}}t_{1})=\lambda^{-\frac{\alpha+2s}{s(\gamma-1)-\alpha}}U_{K}(t_{2})<U_{K}(t_{2}).
    \end{equation*}
    Notice that as $\lambda>1$, we can infer from Lemma~\ref{lem. continuity in slope} that $U_{\lambda K}(t_{1})>U_{K}(t_{1})$, so we have $U_{K}(t_{1})<U_{K}(t_{2})$, and hence we have finished the proof of Theorem~\ref{thm. further property} (3).
\end{proof}

\vspace{2mm}

\noindent \textbf{Acknowledgment.}
The authors  sincerely thank the editor and  the reviewers for their careful reading and insightful comments,  which have clarified ambiguities, corrected errors, and improved the clarity of this paper. 
Guo is partially supported by the NSFC-12501145,  the Natural Science Foundation of Shanghai (No. 25ZR1402207), the Postdoctoral Fellowship Program of CPSF (No. GZC20252004),  the China Postdoctoral Science Foundation (No. 2025T180838 and 2025M773061), and the Institute of Modern Analysis-A Frontier Research Center of Shanghai. Zhang is partially supported by NSFC-12526202, and NSFC-12141105.

\textbf{Conflict of interest.} The authors do not have any possible conflict of interest.

\vspace{2mm}

\textbf{Data availability statement.}
Data sharing is not applicable to this article as no data sets were generated or analyzed during the current study.

\bibliography{main.bib}

@misc {GuWu25,
    AUTHOR = {Yahong Guo and Leyun Wu},
     TITLE = {Classification of solutions to a singular fractional problem in the half space},
      YEAR = {2025, preprint},
}

@article {MMS1,
    AUTHOR = {Montoro, Luigi and Muglia, Luigi and Sciunzi, Berardino},
     TITLE = {The classification of all weak solutions to {$-\Delta
              U=U^{-\gamma}$} in the half space},
   JOURNAL = {SIAM J. Math. Anal.},
  FJOURNAL = {SIAM Journal on Mathematical Analysis},
    VOLUME = {57},
      YEAR = {2025},
    NUMBER = {5},
     PAGES = {5080--5088},
      ISSN = {0036-1410,1095-7154},
   MRCLASS = {35J75 (35A02 35B09)},
  MRNUMBER = {4959940},
MRREVIEWER = {Wen\ Yang},
       DOI = {10.1137/24M167189X},
       URL = {https://doi.org/10.1137/24M167189X},
}

@article {MMS,
    AUTHOR = {Montoro, Luigi and Muglia, Luigi and Sciunzi, Berardino},
     TITLE = {Classification of solutions to {$-\Delta u=u^{-\gamma}$} in
              the half-space},
   JOURNAL = {Math. Ann.},
  FJOURNAL = {Mathematische Annalen},
    VOLUME = {389},
      YEAR = {2024},
    NUMBER = {3},
     PAGES = {3163--3179},
      ISSN = {0025-5831,1432-1807},
   MRCLASS = {35J75 (35A02 35B09)},
  MRNUMBER = {4753083},
MRREVIEWER = {Long\ Wei},
       DOI = {10.1007/s00208-023-02717-4},
       URL = {https://doi.org/10.1007/s00208-023-02717-4},
}

@article {CES2019,
    AUTHOR = {Canino, Annamaria and Esposito, Francesco and Sciunzi,
              Berardino},
     TITLE = {On the {H}\"opf boundary lemma for singular semilinear
              elliptic equations},
   JOURNAL = {J. Differential Equations},
  FJOURNAL = {Journal of Differential Equations},
    VOLUME = {266},
      YEAR = {2019},
    NUMBER = {9},
     PAGES = {5488--5499},
      ISSN = {0022-0396,1090-2732},
   MRCLASS = {35J91 (35B09 35J25 35J75)},
  MRNUMBER = {3912757},
       DOI = {10.1016/j.jde.2018.10.039},
       URL = {https://doi.org/10.1016/j.jde.2018.10.039},
}

@article {JFA2020,
    AUTHOR = {Esposito, Francesco and Sciunzi, Berardino},
     TITLE = {On the {H}\"opf boundary lemma for quasilinear problems
              involving singular nonlinearities and applications},
   JOURNAL = {J. Funct. Anal.},
  FJOURNAL = {Journal of Functional Analysis},
    VOLUME = {278},
      YEAR = {2020},
    NUMBER = {4},
     PAGES = {108346, 25},
      ISSN = {0022-1236,1096-0783},
   MRCLASS = {35J92 (35J67 35J75)},
  MRNUMBER = {4044739},
       DOI = {10.1016/j.jfa.2019.108346},
       URL = {https://doi.org/10.1016/j.jfa.2019.108346},
}

@article {LM1991,
    AUTHOR = {Lazer, A. C. and McKenna, P. J.},
     TITLE = {On a singular nonlinear elliptic boundary-value problem},
   JOURNAL = {Proc. Amer. Math. Soc.},
  FJOURNAL = {Proceedings of the American Mathematical Society},
    VOLUME = {111},
      YEAR = {1991},
    NUMBER = {3},
     PAGES = {721--730},
      ISSN = {0002-9939,1088-6826},
   MRCLASS = {35J60 (35B65)},
  MRNUMBER = {1037213},
MRREVIEWER = {Michael\ Wiegner},
       DOI = {10.2307/2048410},
       URL = {https://doi.org/10.2307/2048410},
}

@article {delP1992,
    AUTHOR = {del Pino, Manuel A.},
     TITLE = {A global estimate for the gradient in a singular elliptic
              boundary value problem},
   JOURNAL = {Proc. Roy. Soc. Edinburgh Sect. A},
  FJOURNAL = {Proceedings of the Royal Society of Edinburgh. Section A.
              Mathematics},
    VOLUME = {122},
      YEAR = {1992},
    NUMBER = {3-4},
     PAGES = {341--352},
      ISSN = {0308-2105,1473-7124},
   MRCLASS = {35J65 (35B65)},
  MRNUMBER = {1200204},
MRREVIEWER = {Xiaoyun\ Ma},
       DOI = {10.1017/S0308210500021144},
       URL = {https://doi.org/10.1017/S0308210500021144},
}

@article {CRT1977,
    AUTHOR = {Crandall, M. G. and Rabinowitz, P. H. and Tartar, L.},
     TITLE = {On a {D}irichlet problem with a singular nonlinearity},
   JOURNAL = {Comm. Partial Differential Equations},
  FJOURNAL = {Communications in Partial Differential Equations},
    VOLUME = {2},
      YEAR = {1977},
    NUMBER = {2},
     PAGES = {193--222},
      ISSN = {0360-5302,1532-4133},
   MRCLASS = {35J65},
  MRNUMBER = {427826},
MRREVIEWER = {Michael\ Wiegner},
       DOI = {10.1080/03605307708820029},
       URL = {https://doi.org/10.1080/03605307708820029},
}

@article {OP2018,
    AUTHOR = {Oliva, Francescantonio and Petitta, Francesco},
     TITLE = {Finite and infinite energy solutions of singular elliptic
              problems: existence and uniqueness},
   JOURNAL = {J. Differential Equations},
  FJOURNAL = {Journal of Differential Equations},
    VOLUME = {264},
      YEAR = {2018},
    NUMBER = {1},
     PAGES = {311--340},
      ISSN = {0022-0396,1090-2732},
   MRCLASS = {35J61 (35J75 35J91 35R06)},
  MRNUMBER = {3712944},
MRREVIEWER = {Alan\ V.\ Lair},
       DOI = {10.1016/j.jde.2017.09.008},
       URL = {https://doi.org/10.1016/j.jde.2017.09.008},
}

@article {BO2010,
    AUTHOR = {Boccardo, Lucio and Orsina, Luigi},
     TITLE = {Semilinear elliptic equations with singular nonlinearities},
   JOURNAL = {Calc. Var. Partial Differential Equations},
  FJOURNAL = {Calculus of Variations and Partial Differential Equations},
    VOLUME = {37},
      YEAR = {2010},
    NUMBER = {3-4},
     PAGES = {363--380},
      ISSN = {0944-2669,1432-0835},
   MRCLASS = {35J25 (35J67)},
  MRNUMBER = {2592976},
MRREVIEWER = {Cristina\ Trombetti},
       DOI = {10.1007/s00526-009-0266-x},
       URL = {https://doi.org/10.1007/s00526-009-0266-x},
}

@article {AGJ2018,
    AUTHOR = {Adimurthi and Giacomoni, Jacques and Santra, Sanjiban},
     TITLE = {Positive solutions to a fractional equation with singular
              nonlinearity},
   JOURNAL = {J. Differential Equations},
  FJOURNAL = {Journal of Differential Equations},
    VOLUME = {265},
      YEAR = {2018},
    NUMBER = {4},
     PAGES = {1191--1226},
      ISSN = {0022-0396,1090-2732},
   MRCLASS = {35R11 (35B09 35B65)},
  MRNUMBER = {3797614},
       DOI = {10.1016/j.jde.2018.03.023},
       URL = {https://doi.org/10.1016/j.jde.2018.03.023},
}

@article {CMSS2017,
    AUTHOR = {Canino, Annamaria and Montoro, Luigi and Sciunzi, Berardino
              and Squassina, Marco},
     TITLE = {Nonlocal problems with singular nonlinearity},
   JOURNAL = {Bull. Sci. Math.},
  FJOURNAL = {Bulletin des Sciences Math\'ematiques},
    VOLUME = {141},
      YEAR = {2017},
    NUMBER = {3},
     PAGES = {223--250},
      ISSN = {0007-4497,1952-4773},
   MRCLASS = {35R11 (35A15 35J62)},
  MRNUMBER = {3639996},
MRREVIEWER = {Patrizia\ Pucci},
       DOI = {10.1016/j.bulsci.2017.01.002},
       URL = {https://doi.org/10.1016/j.bulsci.2017.01.002},
}

@article {GMS2017,
    AUTHOR = {Giacomoni, Jacques and Mukherjee, Tuhina and Sreenadh,
              Konijeti},
     TITLE = {Positive solutions of fractional elliptic equation with
              critical and singular nonlinearity},
   JOURNAL = {Adv. Nonlinear Anal.},
  FJOURNAL = {Advances in Nonlinear Analysis},
    VOLUME = {6},
      YEAR = {2017},
    NUMBER = {3},
     PAGES = {327--354},
      ISSN = {2191-9496,2191-950X},
   MRCLASS = {35R11 (35A15 35R09)},
  MRNUMBER = {3680366},
       DOI = {10.1515/anona-2016-0113},
       URL = {https://doi.org/10.1515/anona-2016-0113},
}

@article {BCT2014,
    AUTHOR = {Brandolini, B. and Chiacchio, F. and Trombetti, C.},
     TITLE = {Symmetrization for singular semilinear elliptic equations},
   JOURNAL = {Ann. Mat. Pura Appl. (4)},
  FJOURNAL = {Annali di Matematica Pura ed Applicata. Series IV},
    VOLUME = {193},
      YEAR = {2014},
    NUMBER = {2},
     PAGES = {389--404},
      ISSN = {0373-3114,1618-1891},
   MRCLASS = {35J91 (35B45 35B50 35B51)},
  MRNUMBER = {3180924},
       DOI = {10.1007/s10231-012-0280-z},
       URL = {https://doi.org/10.1007/s10231-012-0280-z},
}

@book {CLM2020,
    AUTHOR = {Chen, Wenxiong and Li, Yan and Ma, Pei},
     TITLE = {The fractional {L}aplacian},
 PUBLISHER = {World Scientific Publishing Co. Pte. Ltd., Hackensack, NJ},
      YEAR = {[2020] \copyright 2020},
     PAGES = {x+331},
      ISBN = {[9789813223998]; [9789813224001]; [9789813224018]},
   MRCLASS = {35R11},
  MRNUMBER = {4274583},
       DOI = {10.1142/10550},
       URL = {https://doi.org/10.1142/10550},
}

@article {FuMa,
    AUTHOR = {Fulks, W. and Maybee, J. S.},
     TITLE = {A singular non-linear equation},
   JOURNAL = {Osaka Math. J.},
  FJOURNAL = {Osaka Mathematical Journal},
    VOLUME = {12},
      YEAR = {1960},
     PAGES = {1--19},
      ISSN = {0388-0699},
   MRCLASS = {35.62},
  MRNUMBER = {123095},
MRREVIEWER = {D.\ G.\ Aronson},
}

@article {No,
    AUTHOR = {Nowosad, P.},
     TITLE = {On the integral equation {$\kappa f=1/f$} arising in a problem
              in communication},
   JOURNAL = {J. Math. Anal. Appl.},
  FJOURNAL = {Journal of Mathematical Analysis and Applications},
    VOLUME = {14},
      YEAR = {1966},
     PAGES = {484--492},
      ISSN = {0022-247X},
   MRCLASS = {94.10},
  MRNUMBER = {195634},
MRREVIEWER = {J.\ Capon},
       DOI = {10.1016/0022-247X(66)90008-4},
       URL = {https://doi.org/10.1016/0022-247X(66)90008-4},
}

@article {Fo,
    AUTHOR = {Fowler,R.H. },
     TITLE = {The solution of {E}mden's and similar differential equations},
   JOURNAL = {Monthly Notices Roy. Astro. Soc.},
  FJOURNAL = {Journal of Mathematical Analysis and Applications},
    VOLUME = {91},
      YEAR = {1930},
     PAGES = {63--91},
}

@article {Pe,
    AUTHOR = {Perry, W.L.},
     TITLE = {A monotone iterative technique for solution of {$p$}th order
              {$(p<0)$} reaction-diffusion problems in permeable
              catalysis},
   JOURNAL = {J. Comput. Chem.},
  FJOURNAL = {Journal of Computational Chemistry},
    VOLUME = {5},
      YEAR = {1984},
    NUMBER = {4},
     PAGES = {353--357},
      ISSN = {0192-8651},
   MRCLASS = {80A32 (65L10)},
  MRNUMBER = {760372},
MRREVIEWER = {Renuka Datta},
       DOI = {10.1002/jcc.540050412},
       URL = {https://doi.org/10.1002/jcc.540050412},
}

@article {AcShPe,
    AUTHOR = {Acrivos, A. and Shah, M. J. and Petersen, E. E.},
     TITLE = {On the flow of non-{N}ewtonian liquid on a rotating disk},
   JOURNAL = {J. Appl. Phys.},
  FJOURNAL = {Journal of Applied Physics},
    VOLUME = {31},
      YEAR = {1960},
     PAGES = {963--968},
      ISSN = {0021-8979},
   MRCLASS = {76.00 (73.00)},
  MRNUMBER = {112518},
MRREVIEWER = {J. E. Adkins},
}

@article {NaCa,
    AUTHOR = {Nachman, A. and Callegari, A.},
     TITLE = {A nonlinear singular boundary value problem in the theory of
              pseudoplastic fluids},
   JOURNAL = {SIAM J. Appl. Math.},
  FJOURNAL = {SIAM Journal on Applied Mathematics},
    VOLUME = {38},
      YEAR = {1980},
    NUMBER = {2},
     PAGES = {275--281},
      ISSN = {0036-1399},
   MRCLASS = {76A05 (35Q20 76D10)},
  MRNUMBER = {564014},
MRREVIEWER = {Jacques Mauss},
       DOI = {10.1137/0138024},
       URL = {https://doi.org/10.1137/0138024},
}

@article {VaSoMo,
    AUTHOR = {Vajravelu, K. and Soewono, E. and Mohapatra, R. N.},
     TITLE = {On solutions of some singular, nonlinear differential
              equations arising in boundary layer theory},
   JOURNAL = {J. Math. Anal. Appl.},
  FJOURNAL = {Journal of Mathematical Analysis and Applications},
    VOLUME = {155},
      YEAR = {1991},
    NUMBER = {2},
     PAGES = {499--512},
      ISSN = {0022-247X},
   MRCLASS = {34B15 (35Q30 76D05)},
  MRNUMBER = {1097296},
       DOI = {10.1016/0022-247X(91)90015-R},
       URL = {https://doi.org/10.1016/0022-247X(91)90015-R},
}
\bibliographystyle{abbrv}

\end{document}